\theoremstyle{plain}
\newcommand{\beq}{\begin{equation}}
\newcommand{\eeq}{\end{equation}}
\newcommand{\bna}{\begin{eqnarray}}
\newcommand{\ena}{\end{eqnarray}}
\newcommand{\bea}{\begin{eqnarray*}}
\newcommand{\eea}{\end{eqnarray*}}
\newcommand{\bt}[1]{ \begin{tabular} { #1 } }
\newcommand{\et} {\end{tabular}}
\newcommand{\normmm}[1]{{\left\vert\kern-0.25ex\left\vert\kern-0.25ex\left\vert #1 
    \right\vert\kern-0.25ex\right\vert\kern-0.25ex\right\vert}}
 \newcommand{\jmp}[1]{[\![#1]\!]}
 \newcommand\smbull{%
    \raisebox{-0.25ex}{\scalebox{1.7}{$\cdot$}}%
}
\def\cA{{\mathcal A}}
\def\cC{{\mathcal C}}
\def\cF{{\mathcal F}}
\def\cG{{\mathcal G}}
\def\cH{{\mathcal H}}
\def\cI{{\mathcal I}}
\def\cJ{{\mathcal J}}
\def\cM{{\mathcal M}}
\def\cN{{\mathcal N}}
\def\cR{{\mathcal R}}
\def\cT{{\mathcal T}}
\def\mbbF{\mathbb{F}}
\def\mbbN{\mathbb{N}}
\def\mbbR{\mathbb{R}}
\def\mbbS{\mathbb{S}}
\def\mbbV{\mathbb{V}}
\def\mbbZ{\mathbb{Z}}
\def\mrF{\textrm F}
\def\mrG{\textrm G}
\def\mrH{\textrm H}
\def\mrI{\textrm I}
\def\mrJ{\textrm J}
\def\mrM{\textrm M}
\def\mrP{\textrm P}
\def\mrR{\textrm R}
\def\mrT{\textrm T}
\def\mrU{\textrm U}
\def\mrV{\textrm V}
\def\mrW{\textrm W}
\def\mrd{\textrm d}
\def\mrH{\underline{\mathrm{H}}}
\def\eps{\varepsilon}
\def\ou{\bar \rho}
\def\of{\bar f}
\def\oR{\bar R}
\def\cF{{\mathcal F}}
\def\vtau{\tau}
\def\bpm{\begin{pmatrix}}
\def\epm{\end{pmatrix}}
\DeclareMathOperator{\Span}{span}
   \def\rmd{\mathrm d}
 \newcommand{\tcb}[1]{\textcolor{blue}{#1}}
\numberwithin{equation}{section}
\theoremstyle{plain}
\newtheorem{thm}{Theorem}
\newtheorem{lemma}[thm]{Lemma}
\newtheorem{remark}{Remark}
\newtheorem{prop}[thm]{Proposition}
\newtheorem{definition}[thm]{Definition}
\title{Gradient Flows of Interfacial Energies: Curvature Agents and Incompressibility.}
\author{Keith Promislow$^1$}
\address{$^1$Michigan State University, promislo@msu.edu}
\author{Truong Vu$^2$}
\address{$^2$University of Illinois, Chicago, tvu25@uic.edu}
\author{Brian Wetton$^3$}
\address{$^3$University of British Columbia, wetton@math.ubc.ca}
\date{\today}
\begin{document}

\begin{abstract}
We present a framework for the gradient flow of sharp-interface surface energies that couple to embedded curvature active agents. We use a penalty method to develop families of locally incompressible gradient flows that couple  interface stretching or compression to local flux of interfacial mass. We establish the convergence of the penalty method to an incompressible flow both formally for a broad family of surface energies and rigorously for a more narrow class of surface energies.  We present an analysis, including a $\Gamma$-limit, of an Allen-Cahn type model for a coupled surface agent curvature energy.  \end{abstract}
\maketitle

\section{Introduction}

Surface proteins play a large role in reshaping the endomembrane system. This comprises the nuclear membrane, the endoplasmic reticulum (ER), the Golgi apparatus, lysosomes, vacuoles, and vesicles. These membranes do not merely define the boundaries of cellular organelles rather they conduct much of the synthesis, transport, and sorting of proteins and lipids. At a simple level surface proteins can induce invaginations in cell membranes that are the precursor to budding or as adjustable length that accommodate expansion of enclosed volume, see Figure \,\ref{f:Rapo} (left).   At a more complex level, a cell can rearrange its ER by adjusting the relative density of types of morphogenic surface proteins, reshaping it from a largely planar configuration into a helical packing that optimizes density or into fenestrated structures with non-trivial homology that accommodate pass-through \cite{Shemesh-14}, see Figure\,\ref{f:Rapo} (center-right).

\begin{figure}[ht]
    \includegraphics[height=1.0in]{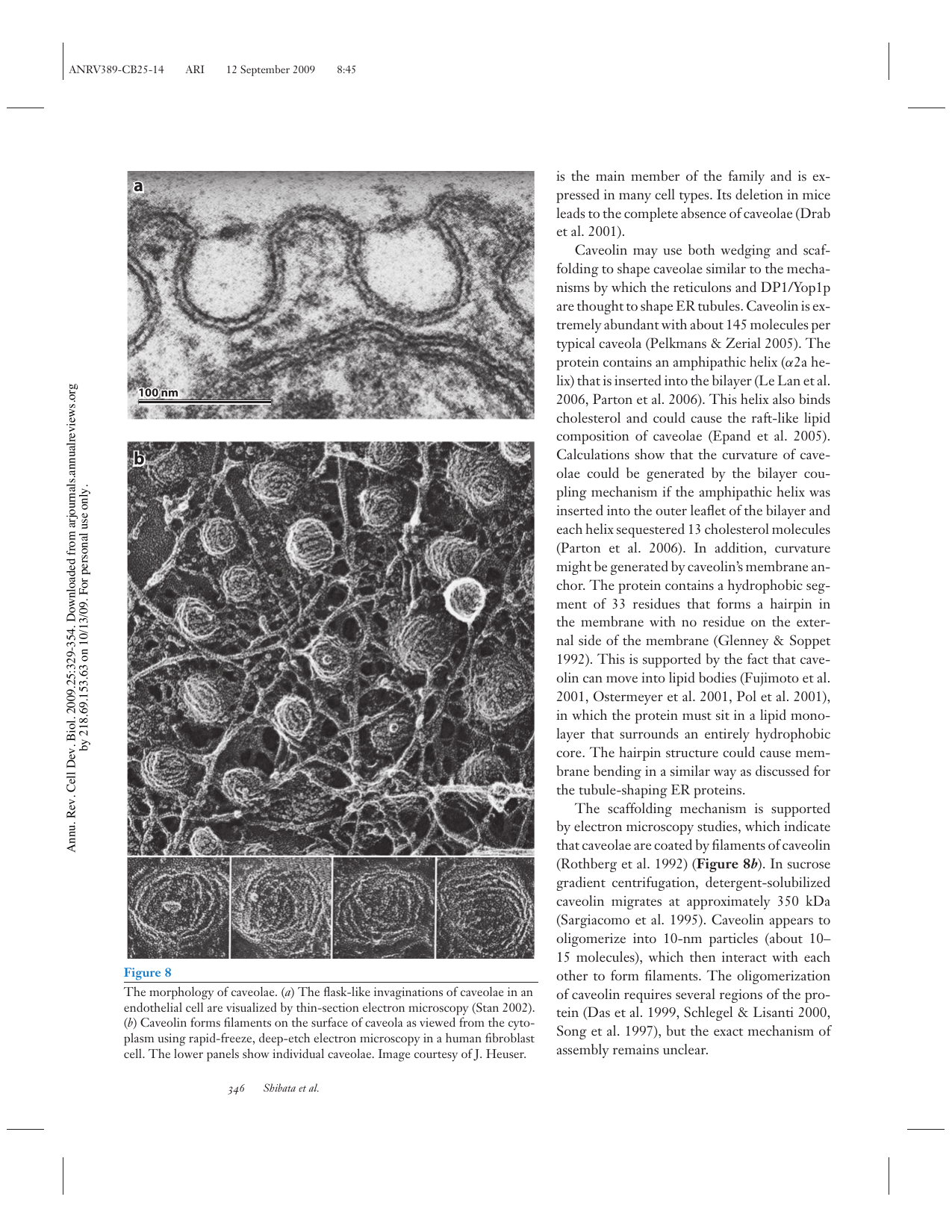}
   \includegraphics[height=1.0in]{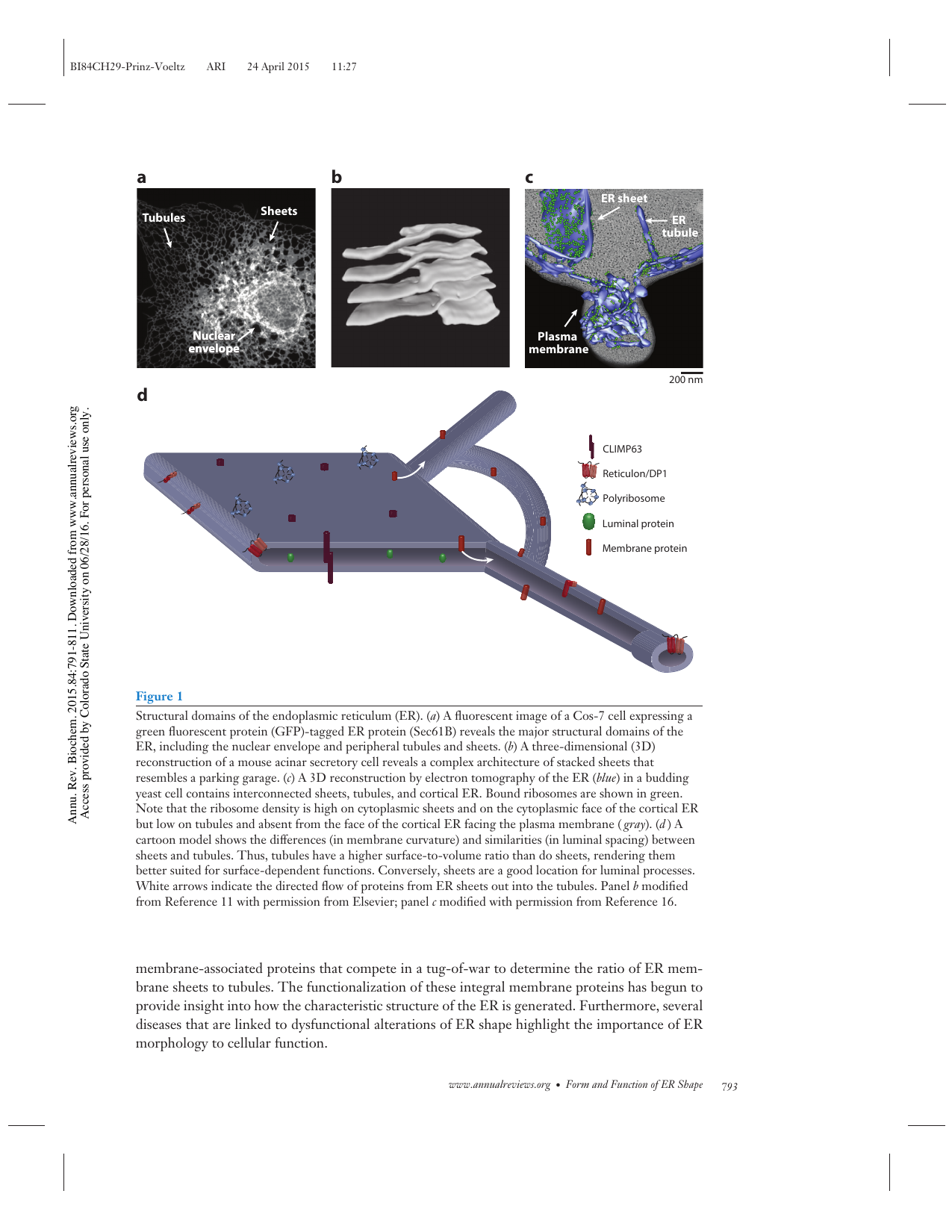}
   \includegraphics[angle=90,height=1.0in]{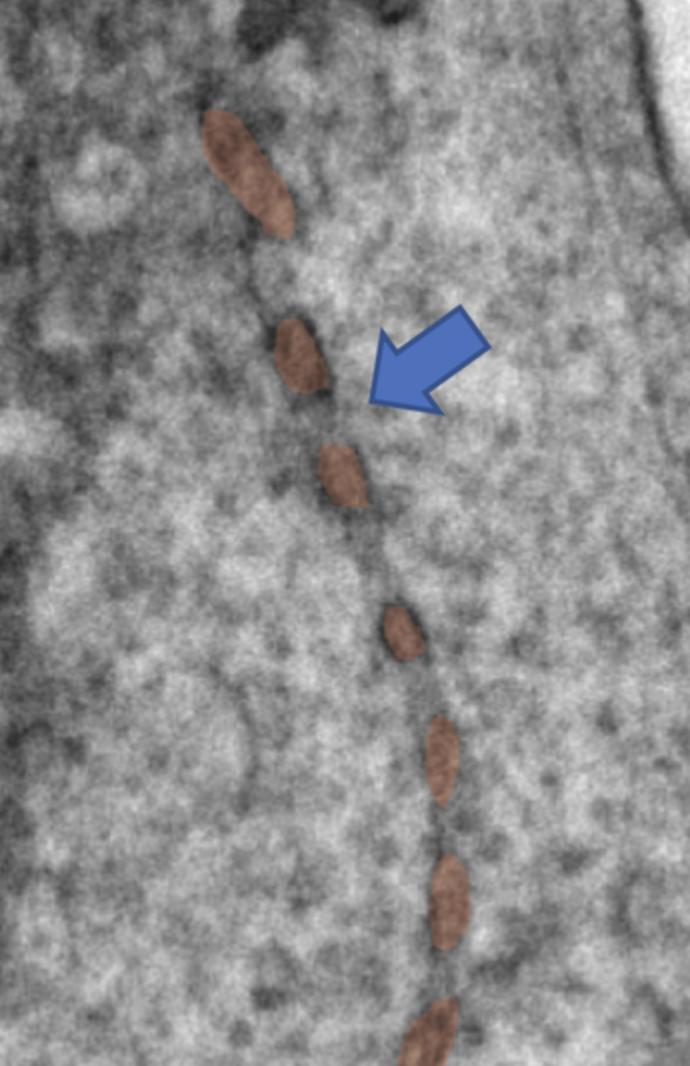}
    \caption{ (Left) Flask-like in-vaginations of caveolea in endothelial cells, curvature thought to be driven by morphogenic proteins via amphipathic helix insertion \cite{Rapo_09}, (center) reconstruction of scanning electron micrograph of helical folding of cisternea in the endoplasmic reticulum \cite{Voeltz_15}.
    (right) Optical image showing cross-sectional view of fenestrated ER. An opening within the membrane (blue arrow) \cite{Fenestated_19}.}
\label{f:Rapo}
\end{figure}

 We present a general formulation for the computation and analysis of gradients flows of  hypersurfaces induced by higher-order surface energies that couple to densities of embedded surface agents, such as surface proteins. There is a distinguished literature addressing the mean curvature and Willmore flows of smooth hypersurfaces embedded in $\mbbR^n$ in particular the seminal work of G. Huisken,
  \cite{Hui-84} and \cite{hui-86contr} for analysis of mean curvature flow. To highlight the structure of the higher-order systems we restrict attention to $n=2,$ corresponding to one-dimensional curves embedded in $\mbbR^2.$ We incorporate surface agents into the  the framework presented in \cite{NPW_25} that highlights the dichotomy afforded by the extrinsic and intrinsic representations of the surface evolution. We apply the framework to examine the role of phase separation of surface proteins in the reshaping of hypersurfaces.
  
  In section 2.5 we derive a rigorous $\Gamma$-limit for the coupled interface-surface protein energy in the limit that  the phase separation of the embedded agents becomes sharp.
  Gamma limits are well established in many cases, \cite{Federer_14}, \cite{Morgan_16}. $\Gamma$ limits of energies on fixed manifolds have been addressed in the recent literature, \cite{Uecker_23} and \cite{Uecker_25}. Our result is unique in that the surface energy couples the interface structure with the density of local agents.
   The work \cite{Roger_23} establishes compactness and lower bounds on Canham-Helfrich type energies for interfaces embedded in $\mbbR^3.$
  We incorporate coercive, higher-order derivatives of the interfacial curvature which simplifies the establishment of a $\Gamma$ limit of the energy \eqref{e:FM}. In this sense fully coupling the interface evolution to the surface density regularizes the analysis.  A recent work, \cite{Hayrap_24} addresses a similar energy in the case that the interface embedded in $\mbbR^3$ can be expressed as a graph above a flat surface in $\mbbR^2$. 
  We conclude section 2 with a characterization of the critical points of the $\Gamma-$limit energy in section 2.6.

Endomembranes are comprised of a lipid bilayer, as such they are fluidic, making them highly deformable without a reference configuration. Like many fluids they are highly incompressible. In a thin-interface limit incompressibility is manifested as a local conservation of hypersurface area. A sub-region of membrane can only grow in area if membrane material flows into it from adjacent regions. In section 3 we present a penalty method that yields a locally incompressible gradient flow for an existing surface energy. We present a formal analysis for a broad category of surface energies, showing the the gradient flow induces an effective membrane flux associated to local changes in interface length. In section 4 we extend this to a rigorous analysis for a reduced family of gradient flows and indeed to normal velocities that do not necessarily correspond to a gradient flow. The main result, Theorem\,\ref{t:ICM} presents an orbital stability result for the penalized system. We define an ``incompressible manifold'' as a graph of above the family of admissible intrinsic variables, and show that solutions of the penalized system that start sufficiently close to the invariant manifold and that remain bounded, converge on a fast time-scale into a thin neighborhood of the incompressible manifold. Moreover we establish that the dynamics of the penalized flow are the same as those of the incompressible flow to leading order.

\subsection{Intrinsic and Extrinsic Vector Fields and Coordinates}
We focus on an evolving curve $\Gamma$ parameterized by a map $\gamma:\mbbS\times[0,T]\mapsto{\mathbb R}^2$
where $\mbbS$ is the circle of unit circumference.  Denoting the unit tangent vector and outward unit normal vector of the curve at the point $\gamma(s,t)$ by $\tau(s,t)$ and $n(s,t)$ respectively, we define the arc-length metric
\beq\label {e:metric-def}
g:=|\partial_s\gamma|,
\eeq
the  surface gradient
$$ \nabla_s f:= \frac{1}{g}\partial_s f,$$
the Laplace-Beltrami operator
$$\Delta_s f :=  \frac{1}{g} \partial_s\left( \frac{1}{g}\partial_s f \right)=\nabla_s(\nabla_s f),$$
the surface measure
 $$\rmd\sigma=g\,\rmd s,$$
and the curvature
\[
\kappa:=-\nabla_s\vtau\cdot n=-|\nabla_s \vtau|.
\]
Under these definitions a circle admits a positive constant curvature. 
The following relations hold
\beq
\label{e:tau-n}
\nabla_s\gamma=\vtau, \quad \nabla_s\vtau=-\kappa n, \quad \nabla_s n=\kappa \vtau.
\eeq
The image of $\gamma,$ is denoted
$$\Gamma:=\Im(\gamma)=\{\gamma(s)\,\bigl|\, s\in\mbbS\}.$$

The intrinsic coordinates $U:=(\kappa,g)^t$ provide a representation of the curve $\gamma$ and its image up to rigid body rotation. The derivation of $\gamma$ from $U$ follows from \eqref{e:metric-def} and \eqref{e:tau-n}.
 Specifically, denote the angle of the tangent vector $\gamma_s(s)$ to the x-axis by $\theta(s)$. The unit tangent and normal vectors $\vtau$ and $n$ can be rewritten as explicit functions of $\theta$:
$$ \vtau(\theta):=\begin{pmatrix} \cos\theta\cr \sin \theta\end{pmatrix}, \quad n(\theta):=\begin{pmatrix} \sin\theta\cr -\cos \theta\end{pmatrix}.$$
 From \eqref{e:tau-n} we obtain an ordinary differential equation in $\mbbR^3$ for $\gamma$,
\beq\label{e:gamma-theta-kappa}
\begin{aligned}
 \nabla_s \theta&=\kappa,\\
 \nabla_s \gamma&=\tau(\theta),
 \end{aligned}
 \eeq
 subject to arbitrary initial data corresponding to a rigid body motion of the image $\Gamma=\Gamma(\gamma(U)).$
A curve $\gamma:\mbbS\mapsto \mbbR^2$ is $\cC^1$ closed if its end points and tangent vectors both align at the point of periodicity of $\mbbS$. More specifically defining the jump of a function $f:\mbbS\mapsto\mbbR,$
$$\jmp{f}:=f(1)-f(0),$$
then a pair $U=(\kappa,g)^t\in H_{\rm per}^k(\mbbS)$-corresponds to a curve $\gamma$ with a $H^{k+2}$ closed image $\Gamma$ if and only if
\begin{align}
  \jmp{\gamma}= \int_\mbbS \vtau(\theta)\,\rmd \sigma &=0, \label{e:C1}\\
  \jmp{\theta}=  \int_\mbbS \kappa(s)\rmd \sigma &=2\pi.\label{e:C2}
\end{align}
The boost in smoothness of $\gamma$ arises from the relation
$$ \Delta_s \gamma=-n(\theta)\kappa,$$
with the right-hand side in $H^k.$
This establishes the admissible class of curvature-arc length pairs that lead to $H^3$ closed images,
\beq \label{e:cA-def}
\cA:=\{U\in H^1(\mbbS)\, \bigl|\, \cJ(U)=0\},
\eeq
where the jump functional $\cJ:H^1(\mbbS)\mapsto \mbbR^3$ is defined in terms of the jumps 
\beq\label{e:cJ-def}
\cJ(U)=(\jmp{\gamma},\jmp{\theta}-2\pi)^t.
\eeq
This formulation  is independent of re-parameterization of the map $\gamma.$ 

The time evolution of a curve $\gamma$ can be specified through an extrinsic vector field
$\mbbV=(\mrV, \mrW)$ where
the normal and tangential velocities, $\mrV(s,t)$ and $\mrW(s,t)$, are
defined on $\mbbS\times[0,T]$.  The extrinsic vector field induces the evolution equation
\beq \label{e:gam-vel}
\gamma_t=\mrV n+\mrW \tau.
\eeq

The extrinsic vector field induces an evolution in the intrinsic vector field, corresponding to the evolution of the first and second fundamental forms of the surface. The intrinsic and extrinsic vector fields induce dual evolution equations.  These are related through a linear map from the extrinsic (Cartesian) vector field $\mbbV$ to the intrinsic vector field $\partial_t U:=(\partial_t \kappa, \partial_t g)^t$, expressed as
\beq\label{e:In-Ext_VF} \begin{pmatrix}
    \partial_t \kappa \cr \partial_t g 
\end{pmatrix} =
\cM 
\begin{pmatrix}
 \mrV \cr \mrW   
\end{pmatrix}.
\eeq
Here the linear operator $\cM=\cM(U)$ takes the form
\beq \label{e:cM_def}
\cM:= \begin{pmatrix} \mrG & \nabla_s \kappa \cr
  g\kappa & g\nabla_s
 \end{pmatrix},
\eeq
where the geometry operator $\mrG:=-\Delta_s -\kappa^2.$ This result is derived in this context in \cite{Pismen_06} for specific cases of $\mbbV$ and more generally in \cite{Barrett_20} for interfaces immersed in $\mbbR^n$. The operator $\cM$ has a three-dimensional kernel spanned by infinitesimal generators of the rigid body motions. The kernel of its adjoint $\cM^\dag$ is spanned by 
\beq\label{e:Psidag}
\Psi_1^\dag=\bpm \gamma_1\cr -\frac{1}{g}(\kappa \gamma_1 +\tau_1)\epm,\, 
\Psi_2^\dag= \bpm \gamma_2\cr -\frac{1}{g}(\kappa \gamma_2 +\tau_2)\epm,\,
\Psi_3^\dag=
\bpm 1 \cr \frac{\kappa}{g}\epm,
\eeq
see \cite{NPW_25} or \cite{Pwu_26} for details specific to this case.

In the form \eqref{e:In-Ext_VF} the evolution requires a tangential velocity to select a specific set of intrinsic coordinates. This is defined through a choice of map $\mrW=\cT(\mrV,U)$, where $\cT:\mbbR^3\mapsto\mbbR$ amounts to a choice of gauge. Typically the choice is either the co-moving frame, for which $\cT=0$, or scaled arc length, in which the arc-length $g$ is spatially constant but temporally evolving. For scaled arc-length the tangential motion satisfies
$$ \nabla_s \mrW=-\kappa\mrV +\frac{\int_\mbbS \mrV\kappa\,\rmd\sigma}{\int_\mbbS \rmd \sigma},$$
where the constant term affords periodic invertability of $\nabla_s.$ More specifically since $g=g(t)$ is a spatial constant for scaled arc-length this implies that, up to a constant
\beq\label{e:SAL}
\cT(\mrV,U)=-\int_0^s\mrV\kappa\,\mrd \sigma + \ell(s;0) \int_\mbbS \mrV\kappa\,\mrd\sigma,
\eeq
where $\ell(s)$ percentage of total   arc-length associated to the curve segment  $\gamma([0,s]).$
Unless otherwise specificed we retain a general tangential flow map and a fixed reference domain $\mbbS$. This systematizes the derivation of the gradient flow and serves to focus attention on the structural properties of the system. 

The admissible set $\cA$ does not preclude self-intersection of the image $\Gamma(U).$ This can be precluded by various additional constraints,
or,  where physically relevant, by  adding energy terms that incorporate self-repulsion, see  \cite{NPW_25} for further discussion.
In the context considered herein, the self-intersection does not impact the evolution of the curve nor of the surface agents and we ignore it to simplify presentation.

\subsection{Notation}
For functions $\{f_\eps, g_\eps\}_{\eps>0}$ that lie in a function space $X$, we write
$$ f_\eps=g_\eps+O(\eps),$$
with error in the $\|\cdot\|_X$ norm if there exists a constant $C>0$, independent of $\eps$ sufficiently small such that
$$ \|f_\eps-g_\eps\|_{X}\le C\eps.$$
If $v\in\mbbR^d$ then $v^t$ denotes its transpose.
A smoothly closed 1 dimensional surface immersed in $\mbbR^2$ we be called a curve or an interface. In some applications these represent biological membranes and will be called membranes.

\section{Gradient flows of Surface Energies coupled to Curvature Active Agents}
In many situations of interest materials adhere to  or are embedded within an interface and are carried by its flow, including tangential motion, while simultaneously influencing the interface's surface energy.  The material derivative expresses the impact of deformation of the underlying interface on the embedded agents. In particular, the derivation of a locally mass preserving gradient flow associated to a given interfacial energy requires coupling of the embedded density to the extrinsic velocity through the material derivative. 

\subsection{Material Derivative}For a general extrinsic velocity $\mbbV=(\mrV,\mrW)$ the material derivative correspond to an evolution that locally conserves density of an embedded scalar under evolution of the curve.  Specifically let $\mbbS_0\subset\mbbS$ be a labeled subset of the evolving curve. If a density contained in the image $\gamma(\mbbS_0)$ of the curve evolves under a zero material derivative, then the change of agent mass should equal the flux of agent mass out of the boundary of the subset, irrespective of the evolution of the interface.
Taking time derivative of the total density of a scalar $\rho$ yields the relation
$$
    \frac{d}{dt}\int_\mbbS \rho\,\mrd \sigma  = \int_\mbbS \left(\rho_t +\rho\frac{\partial_t g}{g}\right)\,\rmd \sigma.
$$
Substituting for the time derivative of the metric from \eqref{e:In-Ext_VF} and integrating by parts against the tangential velocity yields
$$ \begin{aligned}
  \frac{d}{dt}\int_\mbbS \rho\,\mrd \sigma 
    &= \int_\mbbS \left(\rho_t -\mrW \nabla_s\rho  +\rho\kappa \mrV \right)\,\rmd\sigma.
    \end{aligned} $$
This motivates the material derivative
\beq
\label{e:MD}
\frac{D\rho}{Dt}:= 
\rho_t+\kappa\mrV\rho -\mrW\nabla_s \rho=\left(\partial_t U+ \mbbV\cdot \bpm \kappa \cr -\nabla_s\epm\right) \rho.
\eeq
The material derivative zero-flow
\beq\label{e:MD-flow} \frac{D\rho}{Dt}=0,
\eeq
conserves $N$ globally. More significantly,  if a subset $\mbbS_0\subset \mbbS$ is labeled, then the mass of $\rho$ over the labeled set satisfies the evolution,
$$ \frac{d}{dt}\int_{\mbbS_0} \rho \rmd\sigma =
 \int_{\mbbS_0} \left(\rho_t +\rho \nabla_s \mrW +\rho\kappa \mrV \right)\,\rmd\sigma=
\int_{\mbbS_0} \frac{D\rho}{Dt} \rmd\sigma +(\mrW \rho \hat n)\Bigl|_{\partial\mbbS_0}= (\mrW \rho \hat n)\Bigl|_{\partial\mbbS_0},$$
where $\hat n$, the normal to $\partial\mbbS_0$ within $\mbbS,$  takes values in $\{\pm1\}$. The mass of $\rho$ within the evolving set $\gamma(\mbbS_0)$ is impacted only by the flux of $\rho$ out of the boundary and not by the deformation of the interface.


\subsection{Energy Gradient Formulation}
Consider a general energy which couples the intrinsic variables $U$ to a vector-valued density $\rho$
\beq\label{e:Eng_V}
\cF(U,\rho):=\int_\mbbS \mbbF(U,\rho)\,\rmd \sigma.
\eeq
Here $(U,\rho)\in\mbbR^{2+d}$ are the free variables and $\mbbF$ depends upon $U$ and $\rho$ through their surface derivatives $(\nabla_s)^k$ up to order $\bar k\in\mbbN_+.$ 
Imposing the closure conditions $\cJ(U)=0$ through the jump functional defined in \eqref{e:cJ-def},
defines the set of intrinsic variables $U$  that generate  $H^{\bar k+2}$ closed curves $\Gamma=\Gamma(U).$  The admissible set of intrinsic coordinates
$$\cA=\{U\in H^{\bar k}(\mbbS)\,\bigl|\, \cJ(U)=0\},$$
is invariant under any intrinsic evolution generated by an extrinsic flow
$$ \partial_t U = \cM \mbbV, $$
that leaves the curve smooth,
see \cite{NPW_25} for details.  The chain rule implies
$$ \frac{d}{dt} \cF = \int_\mbbS \nabla_\mrI \cF \cdot \bpm \partial_t U\cr \rho_t\epm
\, \rmd \sigma,$$
 where the intrinsic gradient 
 \beq\label{e:Int_grad}\nabla_\mrI \cF=(\partial_\kappa \cF, \partial_g\cF, \partial_\rho\cF^t)^t,
 \eeq 
 takes values in $\mbbR^{d+2}$ and the partial derivative notation applied to $\cF$ denotes {\sl variational derivatives} of $\cF$ with respect to the indicated variable.
 The variational derivative of $\cF$ with respect to arc-length plays a significant role and its calculation involves unfolding the impact of $g$ on powers of the spatial gradient and the surface measure, see \cite{NPW_25} for details.
Converting to an extrinsic velocity through \eqref{e:In-Ext_VF} and substituting the material derivative for $\rho_t$ we obtain
$$
\begin{aligned}
 \frac{d}{dt} \cF &=
 \int_\mbbS \bpm\partial_\kappa \cF\cr \partial_g\cF \epm \cdot \cM\mbbV + \partial_\rho\cF  \cdot\left(\frac{D\rho}{Dt}- \bpm \kappa \rho & -\nabla_s \rho \epm \mbbV\right) \,\rmd \sigma,\\
 &= \int_\mbbS \left( \cM^\dag \bpm\partial_\kappa \cF\cr \partial_g\cF \epm  +\partial_\rho\cF^t  \bpm -\kappa \rho & \nabla_s \rho \epm  \right)\cdot  \mbbV  + \partial_\rho\cF\cdot \frac{D\rho}{Dt}\, \rmd \sigma. 
 \end{aligned}$$
We introduce the $(d+2)\times 2$ augmented intrinsic-extrinsic flow map
 \beq \label{e:cN-def} \cN:= \bpm \mrG  &\nabla_s \kappa \cr
  g\kappa & g\nabla_s \cr
  -\kappa \rho & - \nabla_s \rho\epm,
  \eeq
  and its $2\times(d+2)$ adjoint
  \beq\label{e:cNt-def} \cN^\dag := \bpm  \mrG & g\kappa & -\kappa \rho^t \cr
  \nabla_s \kappa &-\nabla_s(g\smbull) & \nabla_s \rho^t \epm.
  \eeq
  With this notation  the energy dissipation associated to the extrinsic flow $\mbbV$ takes the form
  \beq
  \label{e:SD-dissipation}
  \frac{d}{dt} \cF =\int_\mbbS \cN^\dag \nabla_\mrI \cF
  \cdot \mbbV + \partial_\rho \cF\cdot \frac{D\rho}{Dt}\,\rmd \sigma.
  \eeq

A key feature of the extrinsic formulation is that the energy dissipation is independent of the choice of tangential velocity.
 \begin{lemma}
 \label{l:PIP}
The system \eqref{e:SD-dissipation} satisfies the parameterization-independence property
\beq\label{e:PIP}
[\cN^t]_2\cdot \nabla_{\mrI}\cF =
 \partial_\kappa\cF\nabla_s \kappa+
 \partial_\rho\cF\nabla_s \rho
 -\nabla_s(g\partial_g\cF)=0.
 \eeq
 In particular the tangential velocity prefactor generated by  $\cN^t \nabla_\mrI \cF $ is zero.
 \end{lemma}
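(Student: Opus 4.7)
The plan is to exploit the geometric (parameterization-invariant) nature of $\cF$ as a functional of the physical curve $\Gamma=\Im(\gamma(U))$ and the density $\rho$ it carries. To isolate the tangential-velocity contribution in \eqref{e:SD-dissipation}, I consider the extrinsic flow $\mbbV = (0,\mrW)$ with an arbitrary tangential velocity $\mrW$, coupled to a density evolution satisfying $D\rho/Dt = 0$. Under this flow the image $\Gamma$ is fixed pointwise by \eqref{e:gam-vel} (only its parameterization over $\mbbS$ is rearranged), and by the derivation of \eqref{e:MD} together with the labeled-subset computation following \eqref{e:MD-flow}, the condition $D\rho/Dt=0$ transports $\rho$ with the tangential motion so that its value at each physical point of $\Gamma$ is preserved in time.

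Since $\mbbF$ depends on $(U,\rho)$ only through the parameterization-covariant surface derivatives $\nabla_s^k = (g^{-1}\partial_s)^k$ and the intrinsic surface measure $\rmd\sigma$, the integrand of $\cF$ at each physical point of $\Gamma$ is then time-independent, whence $\frac{d}{dt}\cF = 0$. Substituting $\mrV=0$ and $D\rho/Dt=0$ into \eqref{e:SD-dissipation} collapses the right-hand side to
$$
0 \;=\; \int_\mbbS [\cN^t]_2 \cdot \nabla_\mrI \cF \; \mrW \, \rmd\sigma.
$$
Since $\mrW$ is arbitrary, the fundamental lemma of the calculus of variations forces $[\cN^t]_2 \cdot \nabla_\mrI \cF \equiv 0$ pointwise. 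Reading off the second row of \eqref{e:cNt-def} against $\nabla_\mrI \cF = (\partial_\kappa \cF, \partial_g \cF, \partial_\rho \cF^t)^t$ yields exactly the claimed identity \eqref{e:PIP}.

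The one delicate point is verifying that $\cF$ is truly constant in time under this flow, not merely stationary to leading order. This reduces to two observations: (i) $\mrV\equiv 0$ combined with \eqref{e:gam-vel} implies $\Gamma(t) = \Gamma(0)$ as subsets of $\mbbR^2$; and (ii) the zero material derivative condition presents $\rho$ as a fixed function of physical arclength on $\Gamma$, so that $\kappa$, $\nabla_s^k\kappa$, $\rho$, and $\nabla_s^k\rho$ evaluated at a given physical point do not depend on $t$. A purely algebraic alternative -- unfolding $\partial_g \cF$ via the procedure in \cite{NPW_25} and reassembling the result as a surface-gradient identity for $\mbbF$ -- is available, but it is considerably heavier than the geometric route above and would obscure the fact that \eqref{e:PIP} is simply the Noether identity associated to reparameterization invariance of $\cF$.
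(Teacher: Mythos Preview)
Your proof is correct and follows essentially the same route as the paper's: take $\mbbV=(0,\mrW)$, impose $D\rho/Dt=0$, argue that this is a pure reparameterization so $\cF$ is constant, and read off the pointwise identity from \eqref{e:SD-dissipation} by arbitrariness of $\mrW$. The paper makes the reparameterization explicit via the characteristic map $m$ solving $\partial_t m=\mrW(m,t)$ and writes $(\tilde U,\tilde\rho)=(U,\rho)\circ m$, whereas you phrase the same fact geometrically in terms of values at fixed physical points of $\Gamma$; the content is identical.
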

\begin{proof}
Since the result is infinitesimal, it is sufficient to consider an extrinsic velocity with zero normal component $\mbbV=(0,\mrW).$
The tangential velocity is equivalent to reparameterization of $\mbbS.$ In particular given $\mrW:\mbbS\times\mbbR_+\mapsto\mbbR$ define the map $m:\mbbS\times\mbbR_+\mapsto \mbbS$ via the characteristic flow
\beq\label{e:MoC}
\partial_t m(s,t)=\mrW(m(s,t),t),
\eeq
subject to $\mbbS$ periodicity. For time-independent $(U,\rho)$ define the time-dependent reparameterization flows $\tilde U:=U\circ m$ and $\tilde \rho:=\rho\circ m.$
From the method of characteristics $\frac{D\tilde\rho}{Dt}=0$. On the other hand $(U,\rho)$ and $(\tilde U,\tilde \rho)$ are related by a change of variables. In particular $\tilde\nabla_s^k \tilde U=\nabla_s^k U$, $\tilde\nabla_s^k \tilde\rho=\nabla_s^k\rho$, and $\mrd \tilde\sigma=\mrd \sigma$ where $\tilde\nabla_s=\frac{1}{\tilde g} \partial_s$ and $\tilde\sigma=\tilde g \,\mrd s.$ The energy is invariant under the change of variables induced by $m$, that is $\cF(U,\rho)=\cF(\tilde U,\tilde \rho).$ 
The result follows from \eqref{e:SD-dissipation} and the zero material derivative of $\tilde \rho.$ 
\end{proof}

For any choice of $L^2(\mbbS)$ self-adjoint linear operator $\cG\geq0$ we associated the   $L^2(\mbbS)$-$\cG$ gradient flow system  induced by $\cF$,
\beq\label{e:SD-gradflow}
\begin{aligned}
 \frac{D\rho}{Dt} & = -\cG \,\partial_\rho \cF, \\
  \mrV_\cF&:= - [\cN^t]_1\, \nabla_{\mrI} \cF= -\mrG\partial_\kappa\cF-g\kappa \partial_g \cF +\kappa \rho\partial_\rho\cF,
\end{aligned}
\eeq
where $\mrV\cF$ is the gradient flow normal velocity. The extrinsic velocity can be supplemented with an 
arbitrary tangential velocity relation $\mrW=\cT(\mrV,U)$ without impacting the dissipation mechanism. For the resulting extrinsic velocity,  $\mbbV(U)=(\mrV_\cF(U),\cT(\mrV_\cF,U))^t$,  the energy dissipation mechanism takes the form
\beq
\label{e:Diss_mech}
\frac{d}{dt}\cF(U,\rho) = -\int_\mbbS |\mrV_\cF |^2 + \left|\cG^{\frac12}\partial_\rho \cF\right|^2\,\rmd \sigma=-\int_\mbbS |\mrV_\cF|^2 + \left|\cG^{-\frac12}\frac{D\rho}{Dt}\right|^2\,\rmd \sigma,
\eeq
independent of the choice of $\cT.$ Moreover, if $\cG$ annihilates the constant function, that is $\cG 1=0$, then the flow conserves the total mass of $\rho.$

\begin{figure}[ht]
\includegraphics[height=2.25in]{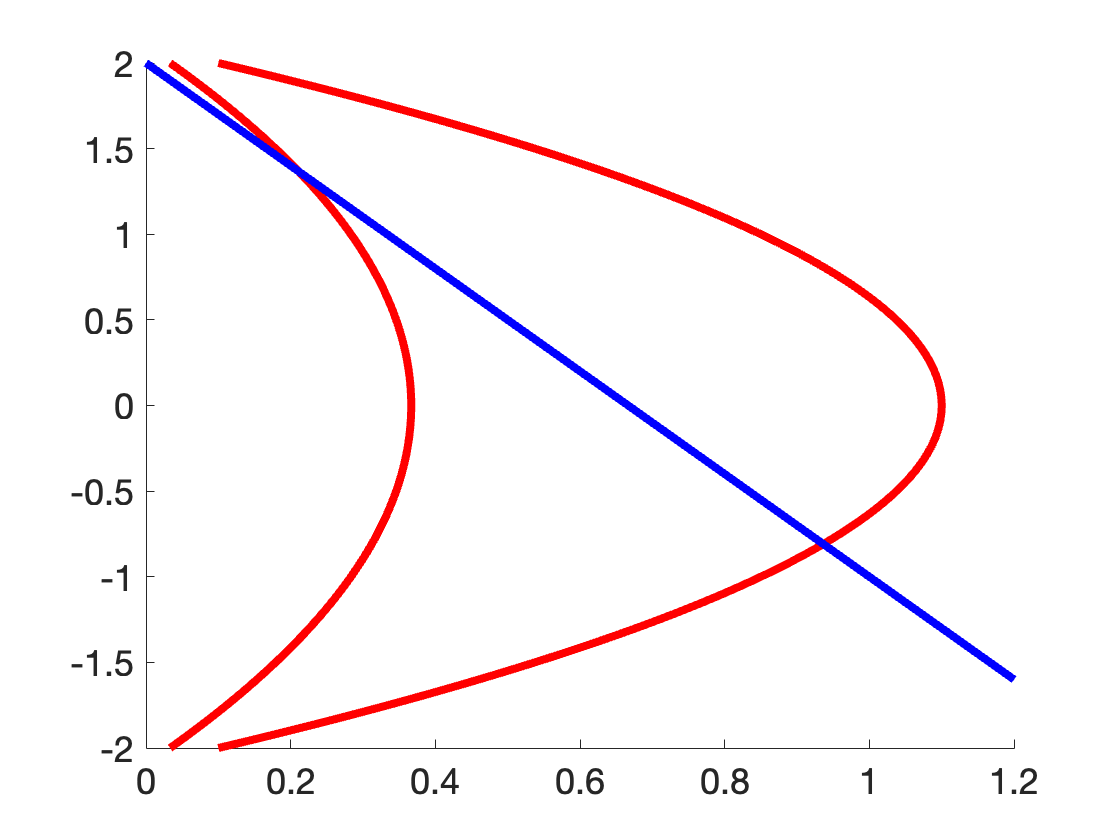}
\put(-210,148){\LARGE $\kappa$}
\put(-45,2){\LARGE $\rho$}
\put(-109,95){\Large $\kappa_0(\rho)$}
\put(-178, 70){\Large $\rho_-(\kappa)$}
\put(-130, 143){\Large $\rho_+(\kappa)$}
\put(-163,125.8){\Large $\circ$}
\put(-61, 53.5){\Large$\circ$}
\includegraphics[height=2.25in]{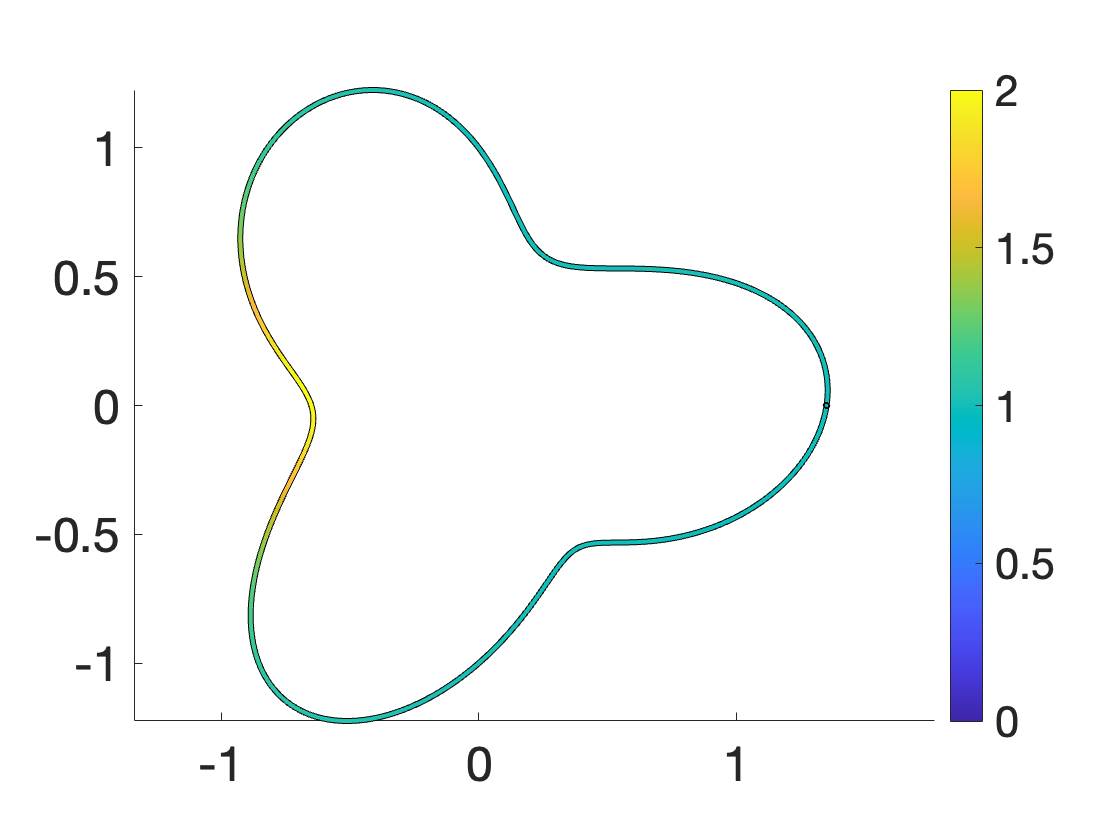}
\caption{\small (Left)
Nodal curves of $f$ (blue) and $F$ (red) showing the two double-nodal points at $(\rho_1,\kappa_1)=(0.19,1.3)$ and $(\rho_2,\kappa_2)=(0.95,-0.7)$.
 (Right) Initial curve associated to intrinsic variables $U_0$  with initial density $\rho_0$ indicated by color-bar.}
\vspace{-0.1in}
\label{f:Feps_init}
\end{figure}

\subsection{Phase Separation in Surface Proteins}
An application of the gradient flow formalism we consider a free energy for a blend of a phase-separating surface proteins that modulate a Canham-Helrich type intrinsic curvature energy. Since the potential energy is non-convex, wellposedness of the gradient flow requires  regularized by surface diffusion of both curvature and a scalar surface protein density $\rho$. We add a term term that controls total surface area   yields a free energy of the form
\beq\label{e:FM} 
\cF_\eps(U,\rho)=\int_\mbbS \Bigl(\frac{\delta}{2}|\nabla_s \kappa|^2+ \frac{1}{\delta}f(\kappa,\rho)+ \frac{\varepsilon}{2}|\nabla_s \rho|^2+ \frac{1}{\eps}\mrF(\kappa,\rho)\,\Bigr)\,\rmd\sigma+\frac{\beta}{2}(|\Gamma|-\sigma_1|\Gamma_0|)^2.
\eeq
The coefficients $\eps, \delta>0$ scale the entropic surface diffusion against enthalpy of mixing, while $\beta>0$ constrains total interface length $|\Gamma|=|\Gamma_U|$ to be proportional to the initial interface length $|\Gamma_0|$ through the parameter $\sigma_1>0.$ For $\beta\gg1$ the total membrane length is held asymptotically constant  after it reaches its quasi-steady value $\sigma_1|\Gamma_0|$.  
The potentials are defined as quadratic distances to zero nodal-line functions 
$$ \begin{aligned}
f(\rho,\kappa)&:=(\kappa-\kappa_0(\rho))^2/2,\\
\mrF(\rho,\kappa)&:=4(\rho-\rho_-(\kappa))^2(\rho-\rho_+(\kappa))^2.
\end{aligned}$$
The nodal line functions are smooth and satisfy,
$$0\leq \rho_-(\kappa)<\rho_+(\kappa)\leq1,$$
with the gap $\rho_+-\rho_-\geq \underline \mrP>0$ uniformly bounded from below by $\underline\mrP\in\mbbR_+$.
We assume that the $\kappa_0$ nodal-line graph $(\rho,\kappa_0(\rho))$ intersects each of the $\rho$ nodal-line graphs 
$(\rho_\pm(\kappa),\kappa)$ precisely once, at $(\rho_1,\kappa_1)$ and $(\rho_2,\kappa_2)$ respectively, see Figure\,\ref{f:Feps_init} (Left). We call these crossings double-nodal points.
We consider the energy over the admissible set
$U\in\cA\subset H^1(\mbbS)$ and $\rho\in H^1(\mbbS)$. We recall that $U$ is subject to the closure constraints $\cJ(U)=0,$ defined in \eqref{e:cJ-def}, while we impose a mass constraint $\rho$ is subject to a mass constraint, 
\beq\label{e:rho_MC}
\int_\mbbS \rho\,\mrd \sigma=M_\rho\in\mbbR.
\eeq

The variational derivatives with respect to $\kappa$ and $\rho$ are standard,
$$\begin{aligned}
 \partial_\rho\cF&= -\eps\Delta_s\rho +\frac{1}{\delta} f_\rho+\frac{1}{\eps}\mrF_\rho,\\
 \partial_\kappa \cF&=-\delta \Delta_s\kappa +\frac{1}{\delta} f_\kappa+\frac{1}{\eps}\mrF_\kappa.
 \end{aligned}$$
 Total arc length satisfies $|\Gamma(U)|=\int_\mbbS \mrd \sigma$ with variational derivative $\partial_g |\Gamma|= \frac{1}{g}. $ This relation and the chain rule yield
 $$
 \partial_g\cF=\frac{1}{g} \left(-\frac{\delta}{2}|\nabla_s \kappa|^2+ \frac{1}{\delta}f(\kappa,u)- \frac{\varepsilon}{2}|\nabla_s \rho|^2+ \frac{1}{\eps}\mrF(\kappa,u)\,\right)+\frac{1}{g}\beta(|\Gamma|-\sigma_1|\Gamma_0|),$$
 where the minus signs on the squared surface gradient terms arise from expanding both the surface gradients and the surface measure in $g.$ 
From \eqref{e:SD-gradflow} with choice of surface gradient $\cG=-\Delta_s,$ we determine the gradient flow
\beq\label{e:FM-GF}
\begin{aligned}
\rho_t&=\bpm \mrV\cr\mrW\epm\cdot \bpm-\kappa\rho \cr \nabla_s\rho \epm -\Delta_s\left( \eps\Delta_s\rho -\frac{f_\rho}{\delta}-\frac{\mrF_\rho}{\eps} \right),\\
\mrV_{\cF_\eps}&= \mrG\left( \delta\Delta_s\kappa-\frac{f_\kappa}{\delta}-\frac{\mrF_\kappa}{\eps} \right) -\kappa\left(-\frac{\delta}{2}|\nabla_s \kappa|^2+ \frac{f}{\delta}- \frac{\varepsilon}{2}|\nabla_s \rho|^2+ \frac{\mrF}{\eps} + \beta(|\Gamma|-\sigma_1|\Gamma_0|)\right) +\\
&\hspace{0.5in}+\kappa\rho\left(-\eps\Delta_s\rho +\frac{f_\rho}{\delta}+\frac{\mrF_\rho}{\eps}\right),
\end{aligned} 
\eeq
where  we recall that $\mrG=-(\Delta_s+\kappa^2)$ while $\rho$ and $\kappa$ subscripts on $f$ and $\mrF$ denote standard partial derivatives with respect to $\rho$ and $\kappa$ respectively.  In the numerical application the tangential velocity $\mrW=\cT(\mrV, U)$ is taken as scaled arc-length, \eqref{e:SAL}. The choice $\cG_\rho=-\Delta_s$ imposes the conservation of mass of the scalar density $\rho.$

\subsection{Numerical Simulation of Gradient Flow of $\cF_\eps$}
\label{s:PS_num}

Simulations of the system \eqref{e:FM-GF} for decreasing values of $\eps>0$ show a convergence towards a sharp phase separation limit that is investigated in Section\,\ref{s:Gamma}. The initial data is taken as depicted in Figure\,\ref{f:Feps_init} with parameter values as reported in Figure\,\ref{f:Feps}.
Since $\sigma_1=1.8>1$, there is an initial transient that induces curve lengthening, as the contribution from the $\beta$ term to $\mrV_{\cF_\eps}$ initially contributes  positive curvature term.  The flows are simulated to equilibrium using an error controlling adaptive time-stepping scheme. The equilibrium curves are roughly 80\% longer than the initial curve. This is reflected in the scale difference in initial and equilibrium curves.  
For the equilibrium configuration, the traces of $(\rho,\kappa)$ as functions of $s\in\mbbS$ are shown in Figure\,\ref{f:Feps} (right). For decreasing $\eps$ deviation from the nodal lines of $\mrF$ is increasingly energetically unfavorable, modulo jumps between the two lines. The equilibrium trace for  $\eps=0.02$ (blue diamond) is significantly closer to the red nodal line than that for $\eps=0.05$ (black *), with sharper transitions between the two nodal lines.
Since $\epsilon$ is taken smaller than $\delta$ 
the trace of $(\kappa,\rho)$ approaches the double-nodal point $(\rho_2,\kappa_2)$  ($\circ$) along the nodal line of $\mrF$.  The traces are slightly offset from the  $F$ nodal lines partially due to the impact of the constraints on the final equilibrium.  As derived in section\,\ref{s:EL} in the limit $\eps\to0$ the critical point system is 
constrained by the curve closure conditions and the $\rho$ mass conservation. 

\begin{figure}[ht]
\includegraphics[width=3.6in]{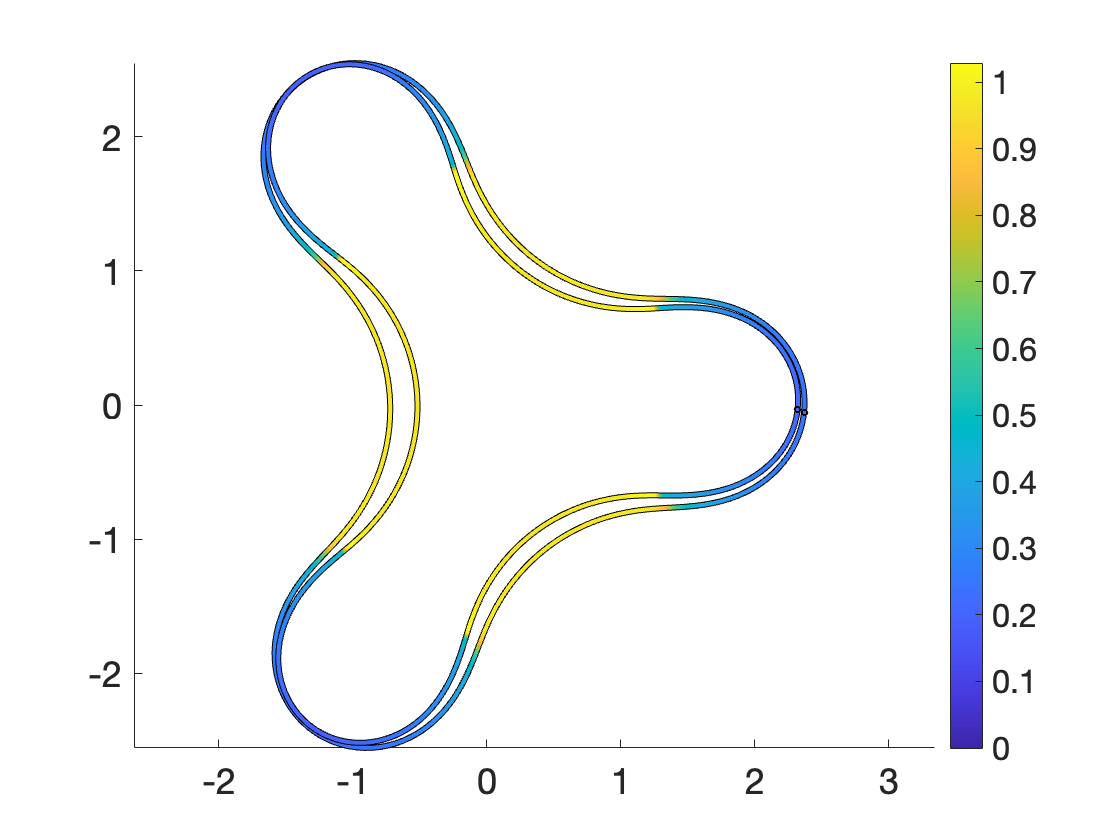}
\includegraphics[width=3.15in]{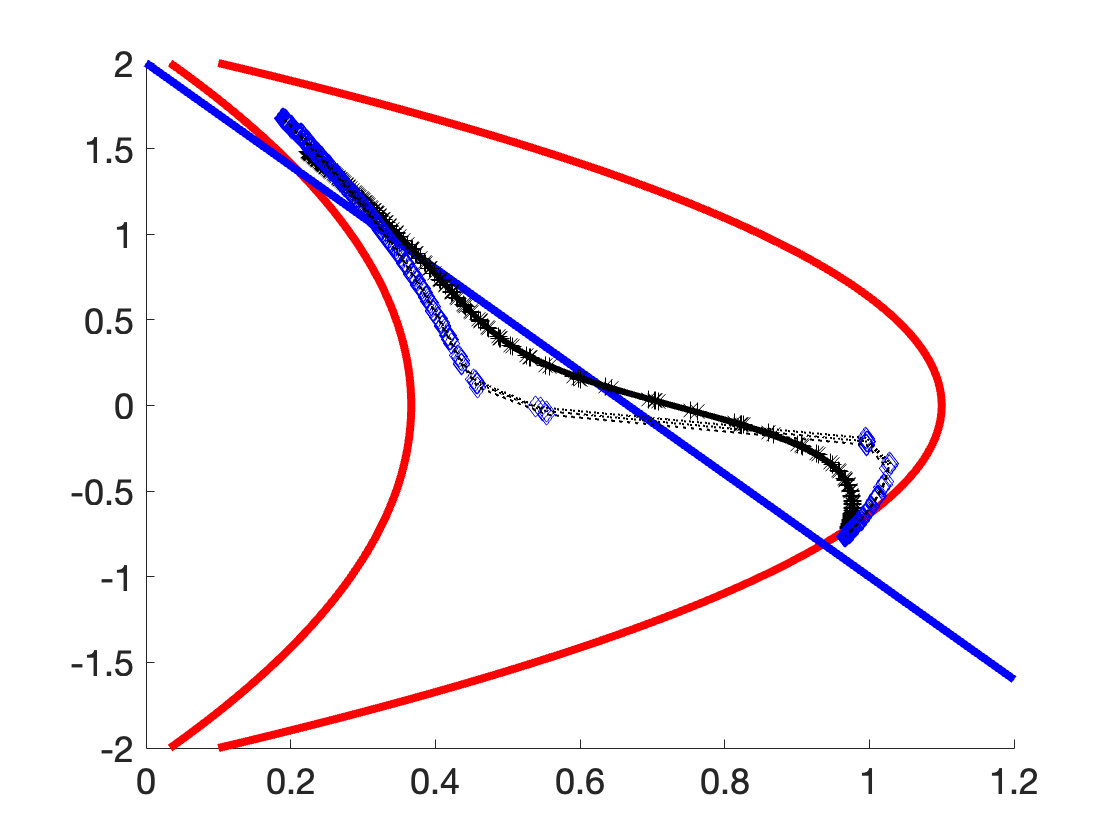}
\put(-40,8){\Large $\rho$}
\put(-214,146){\Large $\kappa$}
\put(-172,131.5){\LARGE$\circ$}
\put(-64.2,56){\LARGE $\circ$}
\put(-350,65){\large $\eps=0.05$}
\put(-370, 84){\large $\eps=0.02$}
\put(-38,45){\Large $\kappa_0(\rho)$}
\put(-181, 82){\Large $\rho_-(\kappa)$}
\put(-130, 150){\Large $\rho_+(\kappa)$}
\vspace{-0.1in}
\caption{\small (Left) Equilibrium interface  $\Gamma\subset\mbbR^2$ resulting from gradient flow with  parameters: $\delta=0.2$, $\beta=3$, $\sigma_1=1.8$ with $\eps =0.05$ and $\eps=0.02$ as labeled. Density $\rho$ indicated through color coding. (Right)
 Superposition of nodal lines of potentials with the trace of equilibrium values of $(\rho,\kappa)$  for $\eps=0.05$ (black *) and $\eps=0.02$ (blue diamond).
}
\label{f:Feps}
\end{figure}

Due to the non-monotone shape of the $\rho_\pm$ nodal lines, the maximum $\rho$ density occurs not at the double-nodal point $(\rho_2,\kappa_2)$ but along the front as it converges to double-nodal point. The density-shaded curve images of Figure\,\ref{f:Feps} (Left), show that the equilibrium resolves into three sets of alternating transition from high to low or low to high density, each with a corresponding smooth transition from positive to negative curvature. For each value of $\eps,$ these transitions induce a 6-fold covering of the trace as shown in Figure\,\ref{f:Feps} (right). This verifies the  symmetry of the equilibrium state. These numerical observations suggest an $\eps\to0^+$ $\Gamma$ limit reduction in which the density $\rho$ becomes slaved to the $\rho=\rho_\pm(\kappa)$ nodal lines, modulo fast excursions between them.

The numerical code is adapted from that described in \cite{NPW_25} and is available on the GitHub page \cite{WGHub_25}.

\subsection{Gamma Limit of $\cF_\eps$}
\label{s:Gamma}
The numerical results presented in Section\,\ref{s:PS_num} suggest that the $\eps\to0^+$ limit leads to minimizers in which the density $\rho$ follows the $\mrF$ nodal lines, except for  discontinuous transitions that provide the minimizers with flexibility to satisfy the mass and curve closure constraints. In this section we provide a rigorous analysis that establishes this as a feature of the $\eps\to0^+$  $\Gamma$-limit of $\cF_\eps.$ We first review the definition of a $\Gamma$ limit.
\begin{definition}\label{def:Gamma-conv}
Let $X$ be a complete metric space. The functional $\cF_0:X\rightarrow \mbbR\cup\{+\infty\}$ is called the $\Gamma$-limit of the functionals $\cF_\eps:X\rightarrow\mbbR\cup\{+\infty\}$ for $\eps>0$ if the following two conditions are satisfied.
\begin{itemize}
    \item The $\liminf$-inequality holds: for any sequence $\{u_\eps\}$ in $X$, we have
    \begin{equation}\label{eq:liminf-ineq}
    u =  \lim\limits_{\eps\rightarrow 0}{u_\eps} \quad  \Longrightarrow  \quad 
        \cF_0[u]\leq \liminf_{\eps\rightarrow 0}{\cF_k[u_\eps]}
    \end{equation}
    \item The $\limsup$-inequality holds: given $u\in X$ there exists a sequence $u_\eps\longrightarrow u$ in $X$ as $\eps\rightarrow 0^+$ such that
\begin{equation}\label{eq:limsup-ineq}
    \cF_0[u]\geq \limsup_{\eps\rightarrow 0}{\cF_\eps[u_\eps]}.
\end{equation}
\end{itemize}
\end{definition}

\begin{remark}
Generically there are no finite-energy limiting sequences for $\cF_\eps$ in the form \eqref{e:FM} when both $\eps$ and $\delta$ tend to zero in a distinguished limit of the form $\delta=\delta(\eps)$ with $\delta(0)=0.$  A finite energy limit would require that the limiting sequence $(U_\eps,\rho_\eps)$ converge to functions $(U,\rho)$ that only take values equal to the double-nodal points
$\{(\kappa_i,\rho_i)\}_{i=1,2}\in\mbbR^2$. With these values assumed on two complimentary sets 
$\{\mbbS_i\}_{i=1,2}$ the relation $|\mbbS_1|+|\mbbS_2|=|\mbbS|$ implies that the mass of $\kappa_\eps$ and $\rho_\eps$ is controlled by a single free parameter, for example $|\mbbS_1|$. However the constraint space prescribes the total mass of $\rho$ and  three curve closure conditions $\cJ(U)=0.$ The limiting functions will not generically satisfy these four constraints. 
A finite energy $\Gamma$-limit with $(\eps,\delta)\to (0,0)$ for $\cF_\eps$ would generically require that the functional form of the energy possesses   {\bf a minimum of five double-nodal points}.
\end{remark}

To simplify the presentation we exclude sequences that converge to a zero length or to an unbounded curve by replacing the length penalty term with a length constraint. Specifically the minimization is taken over $\cA_L\times H^1_M(\mbbS)$ where
\beq
\cA_L:= \{ U\in\cA\,\bigl|\, |\Gamma_U|=L\},
\eeq
and $H^1_M(\mbbS)$ denotes the subset of $H^1(\mbbS)$ that satisfies the mass constraint \eqref{e:rho_MC}.

The co-area formula is the essence of  the lim-inf component of the $\Gamma$-limit.  The application of the co-area formula is greatly simplified by scaling the $\kappa$ dependence from $\mrF$ 
through a change of dependent variable from $(\rho,\kappa)$ to $(\ou,\kappa)$ where,
\beq \label{e:M-def} 
\ou(\rho,\kappa)= \frac{\rho-\rho_-(\kappa)}{\mrP(\kappa)}.
\eeq
and $\mrP$ is the $\rho$ nodal gap function
\beq\label{e:Pdef}
\mrP(\kappa)=\rho_+(\kappa)-\rho_-(\kappa)\geq\underline\mrP,
\eeq
for some constant $\underline\mrP>0.$
From the inverse map
$$ \rho(\ou,\kappa)=\mrP \ou +\rho_-,$$
the $\mrF$ double well reduces to a factored form
$$ \mrF(\rho,\kappa)=\mrP^4(\kappa)\mrF_0(\ou ),$$
where the scaled $\mrF_0$ is a traditional double well potential
\beq \label{e:F0-def}
\mrF_0(\ou)=4\ou^2(\ou-1)^2.
\eeq
Introducing the rescaled functions
\begin{equation}
\begin{aligned}
    \of(\kappa, \ou) &:= f(\kappa, \mrP \ou + \rho_-), \label{e:of-def}\\
    \oR(\kappa,\ou)&:= \ou\nabla_s \mrP+\nabla_s \rho_-,
    \end{aligned}
\end{equation}
allows  the energy functional \eqref{e:FM} to be written in  a $\Gamma$-limit accessible formulation,
\begin{align}
\label{e:FM_GL}
\cF_\eps(U, \ou) &= \int_S \Big( \frac{\delta}{2} |\nabla_s \kappa|^2 + \frac{ \of(\kappa,\ou)}{\delta} 
+ \frac{\varepsilon}{2} \left|\mrP\nabla_s \ou + \oR(\kappa,\ou)\right|^2 + \frac{ \mrP^4\mrF_0(\ou )}{\eps}\Big) d\sigma.
\end{align}
To form the lim inf and lim sup sequences we introduce the sets $\mbbS_\pm$ on which the $\rho_\eps$ functions converge to the $\mrF$ nodal curves $\rho=\rho_\pm(\kappa)$ respectively. 
The transition set $\mrT=\partial\mbbS_+$
 records the domain points in $\mbbS$ where the density switches between the two $\mrF$ nodal curves.
In the unscaled density variable $\rho$ the  limiting density functions will be associated to $\mrT$ and $U$ via the map 
\beq\label{e:hatrho}\hat{\rho}(U,\mrT) = \chi_+(s)\rho_+(\kappa) + \chi_-(s) \rho_-(\kappa),
\eeq
where $\chi_\pm$ are the characteristic functions associated to $\mbbS_\pm.$ The scaled version of $\hat\rho$ is $\chi_+,$ that is $\bar{\hat\rho}=\chi_+.$
The primitive of the root-double well
\beq\label{e:h-def}
\vartheta(s):=\int_0^s \sqrt{F_0(t)}\,\rmd t,
\eeq
plays a significant role in the $\Gamma$-limit, in particular $\vartheta_1:=\vartheta(1)$ is a component of the surface tension associated to a density transition.

Our main result is the $\Gamma$-convergence of $\cF_\eps$ to
\beq\label{e:cF0-def}
\cF_0(U,\mrT)
:=  \int_{\mbbS}\frac{\delta}{2}|\nabla_s \kappa|^2+\frac{1}{\delta} f(\kappa, \hat \rho(\mrT,U))\,\rmd\sigma + \sqrt{2}\vartheta_1 \int_{\mrT}\mrP^3(\kappa)\, d\mathcal{H}^0,
\eeq
where $\cH^0$ is zero-dimensional Hausdorff measure.
With this notation we have the following result.
\begin{thm}On the admissible set $\cA_L\times H^1_M(\mbbS)$ the energy \eqref{e:FM} with $\beta=0$ satisfies
\beq
\label{e:Gamma-NE}
\Gamma\!-\!\lim\limits_{\hspace{-0.15in}\eps\to0^+}\cF_\eps =
\cF_0.
\eeq
More specifically, suppose that $(U_\eps,\rho_\eps)\subset \cA_L\times H_M^1(\mbbS)$  is an energy bounded sequence for $\cF_\eps$ as defined in \eqref{e:FM}. If $(U_\eps,\rho_\eps)\to (U,\rho)$ in $L^2\times L^1(\mbbS)$, then
  $U\in\cA_L$ and there exists a finite transition set $\mrT\subset\mbbS$  such that  $\rho=\hat\rho(U;\mrT)$. In addition we have  $$\liminf\limits_{\eps\to0^+} \cF_\eps(U_\eps,\rho_\eps)\geq \cF_0(U,\mrT).$$
\end{thm}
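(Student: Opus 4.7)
The plan proceeds in three stages: compactness and identification of the limit $(U,\rho)$, a weighted Modica--Mortola estimate for the interfacial terms, and lower semicontinuity for the bulk terms, assembled at the end.

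\textbf{Stage 1 (Compactness and structure of the limit).} The energy bound yields $\|\nabla_s\kappa_\eps\|_{L^2}\le C/\sqrt{\delta}$, so along a subsequence $\kappa_\eps\to\kappa$ weakly in $H^1$, strongly in $L^2$, and uniformly by Sobolev embedding on $\mbbS$. Continuity of the closure functional $\cJ$ on $H^1(\mbbS)$ and the length constraint pass to the limit, giving $U\in\cA_L$. For the density, the bound $\eps^{-1}\int_\mbbS\mrP^4(\kappa_\eps)\,\mrF_0(\ou_\eps)\,\rmd\sigma \le C$ combined with $\mrP\ge \underline\mrP>0$ forces $\mrF_0(\ou_\eps)\to 0$ in $L^1$. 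Since $\mrF_0$ vanishes only at $\{0,1\}$ and $0\le\ou_\eps\le 1$ (arising from coercivity of $\mrF_0$ outside $[0,1]$), one extracts a further subsequence with $\ou_\eps\to\chi_+$ a.e., where $\chi_+$ is the indicator of $\mbbS_+:=\{\ou=1\}$. Hence $\rho_\eps\to \mrP(\kappa)\chi_+ + \rho_-(\kappa)=\hat\rho(U;\mrT)$ with $\mrT:=\partial\mbbS_+$; finiteness of $\mrT$ will emerge from Stage~2 as a BV bound.

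\textbf{Stage 2 (Modica--Mortola with variable, $\kappa$-dependent weight).} Expand
\begin{equation*}
\frac{\eps}{2}\bigl|\mrP\nabla_s\ou_\eps+\oR\bigr|^2 = \frac{\eps}{2}\mrP^2|\nabla_s\ou_\eps|^2 + \eps\,\mrP\,\oR\,\nabla_s\ou_\eps + \frac{\eps}{2}|\oR|^2.
\end{equation*}
The remainder $\tfrac{\eps}{2}|\oR|^2$ is $O(\eps)$ using the $L^2$ control on $\nabla_s\kappa_\eps$ (which bounds $\nabla_s\mrP(\kappa_\eps)$ and $\nabla_s\rho_\pm(\kappa_\eps)$). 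The cross term is handled by integration by parts on the periodic $\mbbS$,
\begin{equation*}
\int_\mbbS \eps\,\mrP\,\oR\,\nabla_s\ou_\eps\,\rmd\sigma = -\int_\mbbS \eps\,\nabla_s(\mrP\oR)\,\ou_\eps\,\rmd\sigma = o(1),
\end{equation*}
since $\ou_\eps$ is bounded in $L^\infty$ and $\nabla_s(\mrP\oR)$ is controlled in $L^1$ by $\|\nabla_s\kappa_\eps\|_{L^2}$ and $\|\nabla_s^2\kappa_\eps\|$-type quantities obtained either directly or via a cut-off; this handles the term that a priori looks like $O(\sqrt{\eps})\cdot O(1/\sqrt{\eps})$. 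Applying Young's inequality to the principal remaining couple,
\begin{equation*}
\frac{\eps}{2}\mrP^2|\nabla_s\ou_\eps|^2 + \frac{\mrP^4\mrF_0(\ou_\eps)}{\eps}\ \ge\ \sqrt{2}\,\mrP^3\,\sqrt{\mrF_0(\ou_\eps)}\,|\nabla_s\ou_\eps| = \sqrt{2}\,\mrP^3\,|\nabla_s(\vartheta\circ\ou_\eps)|,
\end{equation*}
with $\vartheta$ as in \eqref{e:h-def}. Integrating gives a uniform bound on $\int_\mbbS\mrP^3(\kappa_\eps)|\nabla_s(\vartheta\circ\ou_\eps)|\,\rmd\sigma$; since $\mrP^3\ge\underline\mrP^3$, this is a uniform BV bound on $\vartheta(\ou_\eps)\to\vartheta_1\chi_+$. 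Thus $\chi_+\in \mathrm{BV}(\mbbS)$ and $\mrT$ is a finite set.

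\textbf{Stage 3 (Assembly of the lim-inf).} For the bulk, $\tfrac{\delta}{2}\int|\nabla_s\kappa|^2$ is weakly lower semicontinuous along $\kappa_\eps\rightharpoonup\kappa$ in $H^1$, while uniform convergence $\kappa_\eps\to\kappa$ together with pointwise a.e.\ convergence $\rho_\eps\to\hat\rho$ and the continuity and boundedness of $f$ on bounded sets of $(\kappa,\rho)$ yields
\begin{equation*}
\lim_{\eps\to 0^+}\frac{1}{\delta}\int_\mbbS f(\kappa_\eps,\rho_\eps)\,\rmd\sigma = \frac{1}{\delta}\int_\mbbS f(\kappa,\hat\rho(U;\mrT))\,\rmd\sigma
\end{equation*}
by dominated convergence. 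For the surface-tension part, apply Reshetnyak-type lower semicontinuity of the weighted total variation $\mu\mapsto\int a\,|\mu|$ on bounded Radon measures with continuous weight $a=\sqrt{2}\mrP^3(\kappa)$: since $\mrP^3(\kappa_\eps)\to\mrP^3(\kappa)$ uniformly and $\nabla_s(\vartheta\circ\ou_\eps)\rightharpoonup^*\vartheta_1\,\mathrm{D}\chi_+$ weakly as measures, one concludes
\begin{equation*}
\liminf_{\eps\to 0^+}\sqrt{2}\!\int_\mbbS\!\mrP^3(\kappa_\eps)\,|\nabla_s(\vartheta\circ\ou_\eps)|\,\rmd\sigma\ \ge\ \sqrt{2}\,\vartheta_1\!\int_\mrT\!\mrP^3(\kappa)\,\rmd\cH^0.
\end{equation*}
Adding the three contributions produces $\cF_0(U,\mrT)$ as defined in \eqref{e:cF0-def}.

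\textbf{Main obstacle.} The critical difficulty is the coupling of the Modica--Mortola weight $\mrP(\kappa)$ to the other converging field $\kappa_\eps$: the usual scalar-weight argument must be replaced by a Reshetnyak-type semicontinuity that tolerates a uniformly converging continuous weight, and the cross term $\eps\,\mrP\oR\nabla_s\ou_\eps$, which is not sign-definite and a priori $O(1)$, must be controlled by an integration-by-parts that transfers derivatives off $\ou_\eps$. A secondary technical nuisance is confirming the finiteness of $\mrT$ uniformly in the subsequence---handled automatically once the BV bound on $\vartheta(\ou_\eps)$ is in hand.
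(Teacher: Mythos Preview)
Your overall architecture mirrors the paper's proof closely: change to the scaled variable $\ou$, extract $H^1$ compactness for $\kappa_\eps$, apply a weighted Modica--Mortola/Young inequality to produce $\sqrt{2}\,\mrP^3|\nabla_s\vartheta(\ou_\eps)|$, and pass to the limit. Your Reshetnyak-type justification of the weighted total-variation lower semicontinuity is in fact more careful than the paper's one-line appeal to the co-area formula.

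There is, however, a genuine gap in Stage~2. Your integration-by-parts argument for the cross term requires control of $\nabla_s(\mrP\oR)$ in $L^1$, and since $\oR=\ou_\eps\nabla_s\mrP(\kappa_\eps)+\nabla_s\rho_-(\kappa_\eps)$ already carries one derivative of $\kappa_\eps$, the quantity $\nabla_s(\mrP\oR)$ contains $\nabla_s^2\kappa_\eps$. The energy gives only $\kappa_\eps\in H^1$ uniformly, not $H^2$; your phrase ``$\|\nabla_s^2\kappa_\eps\|$-type quantities obtained either directly or via a cut-off'' does not supply the missing estimate. The detour is also unnecessary: the direct Cauchy--Schwarz bound
\[
\Bigl|\int_\mbbS \eps\,\mrP\,\oR\,\nabla_s\ou_\eps\,\rmd\sigma\Bigr|
\le \sqrt{\eps}\,\|\mrP\oR\|_{L^2}\cdot \sqrt{\eps}\,\|\nabla_s\ou_\eps\|_{L^2}=O(\sqrt{\eps}),
\]
already gives $o(1)$, which is what the paper uses (its displayed ``$O(\eps)$'' is an overstatement, but the argument goes through). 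Your parenthetical ``$O(\sqrt{\eps})\cdot O(1/\sqrt{\eps})$'' miscounts the scaling.

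Two smaller points. First, the claim $0\le\ou_\eps\le 1$ does not follow from coercivity of $\mrF_0$ alone; the energy penalizes but does not enforce pointwise bounds, and neither the paper nor your argument actually needs this. Second, for the bulk $\delta^{-1}\int f$ term you invoke dominated convergence, which would require an $L^\infty$ bound on $\rho_\eps$ you have not established; since $f\ge 0$, Fatou's lemma (as the paper uses) gives the required $\liminf$ inequality without any such bound. Finally, note that the theorem asserts the full $\Gamma$-limit, so a recovery-sequence (limsup) construction is still required; your proposal addresses only the liminf half.
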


\begin{remark}
    The Hausdorff integral term in the $\Gamma$-limit energy \eqref{e:FM_GL} admits the explicit form
    $$\int_\mrT \mrP^3(\kappa)\,\mrd \cH^0 = \sum_{i=1}^N \left(\rho_+(\kappa(s_i))-\rho_-(\kappa(s_i))\right)^3.$$
   Modulo other constraints,  transitions are energetically favorable when they occur where the curvature $\kappa(s_i)$ enjoys a small $\mrF$ nodal gap.
\end{remark}

\begin{proof}
It is sufficient to  verify that the lim-inf and lim-sup conditions hold for the scaled energy \eqref{e:FM_GL}.
Fix $\delta$, and assume that $\bar\rho_\eps \rightarrow \bar\rho$ in $L^1(\mbbS)$ as $\eps\rightarrow 0$ and $\liminf_{\eps\rightarrow 0} \cF_\eps(U_\eps, \bar\rho_\eps)<\infty$. Since the curve length is fixed at $L$, we use scaled arc-length parameterization for which the arc length is spatially constant, e.g. $g_\eps\equiv L.$ 
Since $\delta$ is fixed we have the $\|\kappa_\eps\|_{H^1(\mbbS)}$ is uniformly bounded. By passing to a subsequence, we have $\kappa_\eps \rightharpoonup \kappa$ in $H^1(\mbbS)$, $\kappa_\eps\rightarrow \kappa$ in $L^\infty(\mbbS)$, and $\bar\rho_{\eps}\rightarrow \bar\rho$ point-wise almost everywhere as $\eps\rightarrow 0$. In particular $U\in\cA_L$ satisfies the closure constraints and the length constraint. By Fatou's lemma, we have
\beq \begin{aligned}
0&\le \int_{\mbbS}\mrP^4(\kappa)F_0(\ou)\,\mrd \sigma = \int_{\mbbS}\liminf\limits_{\eps\rightarrow 0}\mrP^4(\kappa)F_0(\ou_{\eps} )\\
&\le \liminf\limits_{\eps\rightarrow 0}\int_{\mbbS}\mrP^4(\kappa)F_0(\ou_{\eps}) \le \liminf\limits_{\eps\rightarrow 0^+}\eps \cF(U_\eps, \ou_{\eps})=0.
\end{aligned}
\eeq
Therefore, $F_0(U,\bar\rho)=0$ and $\ou$ takes the values $\{0,1\}$ almost everywhere. In particular $\bar\rho=\chi_+$ for some set $\mbbS_+$ defined through a transition set $\mrT=\partial\mbbS_+$. Moreover $\bar \mrR(\kappa_\eps,\rho_\eps)\to \bar \mrR(\kappa,\hat\rho)$ in $L^1(\mbbS)$ and $\|\bar\mrR(\kappa_\eps,\rho_\eps)\|_{L^1}$ is bounded independent of $\eps.$
This implies that
\beq
\begin{aligned} 
\frac{\varepsilon}{2} \left|\mrP\nabla_s \ou + \oR(\kappa_\eps,\ou)\right|^2 + \frac{ \mrP^4F_0(\ou )}{\eps}
&= \frac{\eps}{2}\mrP^2|\nabla_s \ou_\eps|^2 + \frac{\mrP^4}{\eps} F_0(\ou_\eps) + O(\eps),\\
&\geq \sqrt{2}\mrP^3|\nabla_s \ou_\eps |\sqrt{F_0(\ou_\eps )}+O(\eps),\\
&\geq  \sqrt{2}\mrP^{3} |\nabla_s \vartheta(\ou_\eps)| +O(\eps),
\end{aligned}
\eeq
where the error is measured in the $L^1(\mbbS)$ norm.
In addition from Fatou's Lemma the rescaled $\of$ from \eqref{e:of-def}
satisfies 
$$\liminf\limits_{\eps\to0^+}\int_\mbbS \of(\kappa_\eps,\ou_\eps)\,\mrd\sigma \geq \int_\mbbS\of(\kappa,\chi_+)\,\mrd\sigma.$$
Combining these arguments
\begin{align*}
   \liminf_{\eps\rightarrow0^+} 
   \cF_{\eps}(U_\eps,\ou_\eps)&\geq \liminf_{\eps\rightarrow0^+}\left(\int_{\mbbS} 
   \frac{\delta}{2}|\nabla_s \kappa_\eps|^2 +\frac{ \of(\kappa_\eps,\ou_\eps)}{\delta} +\sqrt{2}\mrP^3|\nabla_s \vartheta(\ou_\eps)|+O(\eps)\right)\rmd \sigma,\\
     &\ge 
 \int_{\mbbS}  \frac{\delta}{2}|\nabla_s \kappa|^2 +\frac{1}{\delta} \of(\kappa, \chi_+)\,\rmd \sigma +\sqrt{2}\liminf_{\eps\rightarrow 0^+}\left(\int_{\mbbS} \mrP^3(\kappa)|\nabla_s \vartheta(\ou_{\eps})|\,\mrd\sigma +O(\eps)\right),\\
    &\ge
     \int_{\mbbS} \frac{\delta}{2}|\nabla_s \kappa|^2 +\frac{1}{\delta} f(\kappa, \chi_+)\, \rmd\sigma+\sqrt{2}\int_{\mbbS} \mrP^3(\kappa)|\nabla_s \vartheta(\chi_+) | \rmd\sigma. 
\end{align*}
Applying the co-area formula to the last integral, and using that fact that $\vartheta(\chi_+)$  takes only the values 
$\{0,\vartheta_1\}$, allows the right-hand side to be  repackaged as the $\Gamma$-limit energy \eqref{e:Gamma-NE}. 

To establish the lim-sup condition, consider a limit point $(U,\ou)\in \cA_L\times BV(\mbbS, \{0,1\})$. Since the energy of the limiting sequence is bounded below by $\cF_0(U,\mrT)$ from \eqref{e:FM_GL}, and since the nodal gap $\mrP\geq\underline\mrP>0$ is bounded away from zero, it follows that the transition set $\mrT$ associated to a finite energy function is finite. We denote this set by  $\mrT=\{s_i\}_{i=1}^{N}\subset \mbbS$ for some $N\in\mbbZ_+$.  We assume that $\eps$ is sufficiently small that the transition gaps satisfy $s_{i+1} - s_i\gg\eps$ for each $i=1,\ldots N$ subject to the periodic identification $s_{N+1}=s_1.$ The recovery sequence is constructed by rescaling the 
$0\to 1$ heteroclinic solution to  
    \begin{align}\label{eq:equili}
        g^2\partial_s^2\varphi = \mrF_0'(\varphi),\end{align}
on the line $\mbbR.$ We translate $\varphi$ so that $\varphi(0)=\frac12.$
Fixing $\mrT$, for each $\eps>0$ sufficiently small we define the recovery sequence $\ou_{\eps}$ 
$$\ou_{\eps}(s)= \varphi\left(\frac{z \mrP(\kappa) }{\eps}\right),$$ 
where the $\mrT$-sawtooth function function $$z(s):=(-1)^{i_a}d(s,\mrT),$$ 
and $d$ is the  signed distance of $s$ to $\mrT$  achieved at argmin $s_{i_a}.$  
As depicted in Figure\,\ref{f:Sawtooth}, $z$ is piece-wise linear with a discontinuous derivative at each midpoint of any two consecutive jump locations, at which point $\phi'$ is exponentially small. The $U$ component of the recovery sequence is $U_\eps=U.$

\begin{figure}[ht]
\includegraphics[width=3.5in]{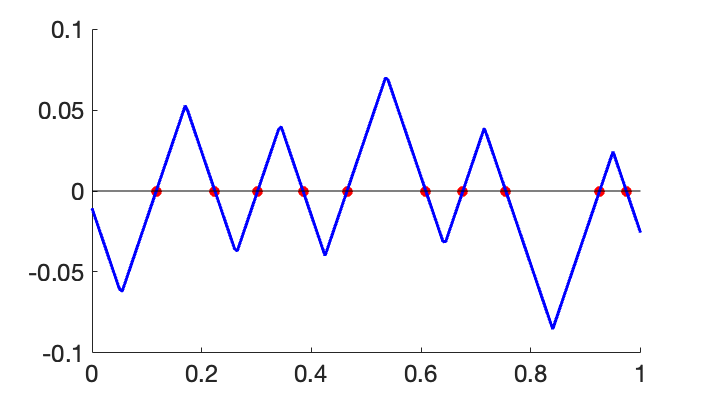}
\put(-230,117){\Large $z$}
\put(-40,-0){\large $\mbbS$}
\put(-197,65){\large $s_1$}
\put(-177,76){\large $s_2$}
\put(-161,65){\large $s_3$}

\caption{\small Plot of $\mrT$-sawtooth function $z$ for a switching set $\mrT=\{s_1,\ldots s_{10}\}$ indicated by red balls. }
\label{f:Sawtooth}
\end{figure}

 We calculate that 
    $$ |\nabla_s \ou_{\eps}|= |\varphi'|\frac{\mrP+ z\mrP'(s)}{\eps}.$$
 By assumption $\mrP$ is uniformly bounded away from zero, so the second term $$|\varphi'(z\mrP/\eps)|\frac{z}{\eps}$$ is uniformly bounded in $L^\infty$ due to the fast exponential decay of $|\varphi'(z\mrP/\eps)|.$ We deduce that
    $$ |\nabla_s \ou_\eps|= \eps^{-1}|\varphi'|\mrP + O(1),$$
    with the error measured in $L^\infty$.

Evaluating the energy functional \eqref{e:FM_GL} at the recovery sequence $\cF_\eps(U,\ou_\eps)$
attention is focused on the last two terms inside the integral. For the term
\[
\frac{\eps}{2} \int_{\mbbS} \left|\mrP\nabla \ou_{\eps} + \bar\mrR (\kappa, \ou_{\eps})\right|^2 \, \mrd\sigma,
\]
we have $\bar\mrR(\kappa, \ou_{\eps})$ is uniformly bounded, while the dominant contribution from $\nabla_s\ou_\rho$ comes from $s$ near $s_i$. 
Extracting this dominant term
\[
\frac{\eps}{2} \int_{\mbbS} \left|\mrP\nabla \ou_{\eps} + \bar\mrR (\kappa, \ou_{\eps})\right|^2 \, \mrd\sigma=
\frac{\eps}{2} \int_{B_{\eps}(s_i)} \mrP^2 \left|\nabla \ou_{\eps}\right|^2 \, d\sigma+O(\sqrt{\eps}).
\]
Changing variables $\tau(s) = \mrP d(s,\mrT)/\eps$ we have 
\beq\label{e:tau-cov}
\rmd \tau = \frac{\mrP }{\eps}\mrd \sigma +O(s\eps^{-1}).
\eeq
Keeping only leading order terms in $\eps,$ we have
\beq \label{e:rec1}
\frac{\eps}{2} \int_{B_{\eps}(s_i)} \mrP^2 \left|\nabla \ou_{\eps}\right|^2 \, d\sigma=\frac{1}{2} \mrP^3(\kappa(s_i)) \int_{-\infty}^{\infty} \left|\varphi'(z)\right|^2 \, \rmd \tau =\mrP^{3}(\kappa(s_i))  \sigma_1.
\eeq
Here the classic surface tension 
$$\sigma_1 := \frac{1}{2} \int_{-\infty}^{\infty} \left|\varphi'(\tau)\right|^2 \, d\tau=\int_{-\infty}^{\infty} F_0(\varphi(\tau)) \, d\tau,$$ 
where the second equality is the first integral of \eqref{eq:equili}. Changing variables $t=\varphi(t')$ in the definition of   $\vartheta_1$ 
 we obtain
\begin{align}
\vartheta_1 =  \int_{-\infty}^{\infty} \sqrt{F_0(\varphi(t'))} \sqrt{2 F_0(\varphi(t'))} \, \mrd t'  = \sqrt{2} \int_{-\infty}^{\infty} F_0(\varphi(t')) \, \mrd t' = \sqrt{2} \sigma_1.
\end{align}
Summing \eqref{e:rec1} over all $s_i\in\mrT$:
\[
\limsup_{\eps\to 0^+} \frac{\eps}{2} \int_{\mbbS} \left|\mrP \nabla \ou_{\eps} + \bar\mrR\right|^2 \, d\sigma 
\leq 
\sum_{i=1}^{N} \mrP^3(\kappa(s_i))  \sigma_1
= 
\sum_{i=1}^{N} \mrP^3(\kappa(s_i)) \frac{\vartheta_1}{\sqrt{2}}.
\]
For the last term in the integrand of the energy \eqref{e:FM_GL}, we consider
 $s$ is close to $s_i$,  where we have $F_0(\ou_{\eps} )\simeq F_0(\varphi(s)).$ Changing variables to $\tau$ as in \eqref{e:tau-cov} and keeping only leading order terms
yields
\beq\begin{aligned}
\label{e:rec2}
 \int_{B_{\eps}(s_i)}\!\! \frac{ \mrP^4(\kappa)F_0(\ou_{\eps} )}{\varepsilon_n}\,\mrd\sigma &=
 \mrP^3(\kappa(s_i))\int_{-\infty}^{\infty} F_0(\varphi(\tau))\,\mrd \tau
 =\mrP^3 (\kappa(s_i))\sigma_1=\mrP^3 (\kappa(s_i))\frac{\vartheta_1}{\sqrt{2}}.
\end{aligned}\eeq
Summing \eqref{e:rec2} over $s_i$, we have
\begin{align}
    \limsup_{\eps\rightarrow 0^+}   \int_{\mbbS} \frac{ \mrP^4 F_0(\ou_{\eps} )}{\eps} \,\mrd\sigma\le\sum_{i=1}^{N} \mrP^3 (\kappa(s_i))\frac{\vartheta_1}{\sqrt{2}}.
\end{align}
Adding these two contributions we deduce that
\begin{align}
      \limsup_{\eps\rightarrow0^+} \cF_{n}(U,\ou_{\eps})\leq
   \cF_0(U,\ou).
\end{align}
This completes the recovery sequence analysis.
\end{proof}

\subsection{Critical Point System for $\Gamma$-limit Energy}
\label{s:EL}
Critical points of \eqref{e:cF0-def} subject to the closure constraint \eqref{e:cJ-def} and the mass constraint \eqref{e:rho_MC} satisfy the Euler-Lagrange system.  Details on the tangent plane to the manifold of intrinsic representations of closed curves are given in \cite{NPW_25}[Prop. 3.1]. To simplify the derivation of the Euler Lagrange system we replace the constrained energy with an unconstrained Lagrangian energy associated to intrinsic coordinates $U\in\cA$ and transition set $\mrT,$
 $$
\begin{aligned}
\Lambda (U,\mrT) := \cF_0(U, \mrT) +\lambda \cdot \cJ(U)+\mu \int_\mbbS \hat\rho(\kappa,\mrT)\mrd\sigma,
\end{aligned}
$$
where $\lambda\in\mbbR^3$ and $\hat\rho$ is defined in \eqref{e:hatrho}. The admissible class is opened to $U\in H^1_L(\mbbS)$, corresponding to curves of length $L$ but which may not be closed. The Lagrange multipliers $\lambda$ are adjusted to enforce the $\cJ(U)=0$.  Moreover, for critical points of the Lagrangian energy their curvature and arc-length are generically continuous across $\mrT$ but $\nabla_s \kappa$  may be discontinuous there.  Along an admissible path $(U(\tau),\mrT(\tau))$ for $\tau\in\mbbR,$ 
$$ \begin{aligned}
    \frac{\rmd}{\mrd\tau} \cF_0 &= \int_\mbbS\left(\delta \nabla_s \kappa\nabla_s \kappa_\tau + \frac{1}{\delta} \left( \partial_\kappa f +\partial_u f \frac{\mrd\hat \rho}{\mrd \kappa}\right)\kappa_\tau + \partial_g\cF_0 g_\tau\right) \rmd \sigma+ \\ 
&\hspace{0.1in}\sum_{i=1}^N \left(\left(\frac{\delta}{2}\jmp{|\nabla_s \kappa|^2} +\mrP^3(\kappa))\right)\dot s_i +
3\vartheta_1\mrP^2(\kappa)\mrP'(\kappa)\kappa_\tau\right)\Bigl|_{s=s_i}.
\end{aligned}$$
However for $U\in H^1(\mbbS)_L$ we have $g\equiv L$ and $g_\tau=0.$ Integrating by parts we have the reduction
$$
\begin{aligned}
 \frac{\mrd}{\mrd\tau} \cF_0
&= \int_\mbbS\left(-\delta\Delta_s \kappa + \frac{1}{\delta} \left( \partial_\kappa f +\partial_\rho f \frac{\mrd\hat \rho}{\mrd \kappa}\right)\right)\kappa_\tau \,\rmd \sigma+\\
&\hspace{0.1in}\sum_{i=1}^N \left(\left(\frac{\delta}{2}\jmp{|\nabla_s \kappa|^2} +3\vartheta_1\mrP^2(\kappa)\mrP'(\kappa)\right)\dot s_i +  \left(3\vartheta_1\mrP^2(\kappa)\mrP'(\kappa)-\delta\jmp{\nabla_s\kappa} \right) \kappa_\tau\right)\Bigl|_{s=s_i},
    \end{aligned}$$
The jump function $\cJ$ and mass constraint do not involve  $\nabla_s\kappa$ so their variations are classical
$$  \frac{\mrd}{\mrd \tau}\left(\lambda \cdot\cJ(U)+\mu \int_\mbbS \hat\rho\,\mrd\sigma\right)=\sum_{i=1}^3\lambda_i\left\langle\Psi^\dag_i,\bpm \kappa_\tau\cr 0 \epm \right\rangle_{L^2(\mbbS)}\!\!\!+\lambda_4 \int_\mbbS \frac{\mrd\hat\rho}{\mrd\kappa}\kappa_\tau\,\mrd\sigma,$$
see \cite{NPW_25} and \cite{Pwu_26} for more details on the construction of variational derivative $\nabla_U\cJ$ and its connection to 
$$\ker(\cM^\dag)=\Span\{\Psi_1^\dag, \Psi_2^\dag, \Psi_3^\dag\}.$$

The contributions from $\kappa_\tau$  in the bulk integrals and $\dot{s_i}$ and $\kappa_\tau(s_i)$ in the transition point terms are independent: each must separately have a zero-coefficient to form a critical point of $\cF_0$. 
This motivations a partitioning of the domain into `free boundary' intervals 
$$\mbbS=\cup _{i=1}^N \mrI_i, $$
where $\mrI_i=[s_i, s_{i+1}]$ for $i=1,\ldots, N$. On each interval $\mrI_i$ we have an Euler-Lagrange system for $\cF_0,$
\beq\label{e:GL_EL}
\begin{aligned}
 -\delta^2 \Delta_s\kappa +\partial_\kappa f +\partial_\rho f\frac{\mrd\hat \rho}{\rmd\kappa}&=
 \delta\left(\lambda_1\gamma_1+\lambda_2\gamma_2+\lambda_3+\lambda_4 \frac{\mrd \hat\rho}{\mrd \kappa}\right).
\end{aligned}
\eeq
Here $\gamma=(\gamma_1,\gamma_2)^t$ is the curve map $\gamma:\mbbS\mapsto\mbbR^2$ induced by $U$ through \eqref{e:gamma-theta-kappa} which makes is a non-local system. At each transition point we have continuity of $\kappa$ plus two criticality conditions
\beq\begin{aligned}
\jmp{|\nabla_s \kappa|^2}_{s_i} &=-\frac{6}{\delta}\vartheta_1\mrP^2(\kappa_i)\mrP'(\kappa_i),\\
\jmp{\nabla_s\kappa}_{s_i}&=\frac{3}{\delta}\vartheta_1\mrP^2(\kappa)
\mrP'(\kappa).
\end{aligned}
\eeq
This system of scalar equations can be solved at each $s_i$, yielding nonlinear-Robin conditions
plus continuity conditions,
\beq\label{e:BCs}
\begin{aligned}
\nabla_s\kappa(s_i^+)&=-1+\frac{3\vartheta_1}{2\delta} \mrP^2(\kappa_i)\mrP'(\kappa_i),\\
\nabla_s\kappa(s_i^-)&=-1-\frac{3\vartheta_1}{2\delta} \mrP^2(\kappa_i)\mrP'(\kappa_i), \\
\jmp{\kappa}_{s_i}&=0.
\end{aligned}
\eeq
Collectively the system \eqref{e:GL_EL} subject to \eqref{e:BCs} represents  $N$ nonlocal second-order equations, subject to $3N$ boundary conditions. The equation count is balanced by considering $N$ of the boundary conditions as determining the location of the $N$ transition points $\mrT=\{s_i\}_{i=1}^N$.  The four Lagrange multipliers represent free parameters used impose the four constraints arising from the curve closure $\cJ(U)=0$ and the $\rho$-mass  \eqref{e:rho_MC} conditions.

\section{Incompressible Interface Flows}

 As an application of a coupled curvature-density energy we derive the gradient flow of an energy that penalizes variation in membrane density. In the limit of infinite penalty we show that the gradient flow converges to a normal velocity that renders the membrane incompressible while representing its  fluidic properties through a flux of membrane material. We consider a base energy $\hat\cF(U,\rho)$ that couples interface structure $U$ to embedded agents $\rho$. To this we add a scalar membrane density  $\rho_m:\mbbS\to\mbbR_+$ and an associated penalty term that pins $\rho_m$ to a reference value, $\rho_*>0$. We ignore the volumetric contribution of the embedded agents in this application.  The compressibility penalized energy takes the form
\beq \label{e:IEE}
\cF_{\eps}(U,\rho,\rho_m)=\frac{1}{\eps}\int_\mbbS \mrP(\rho_m) \,\rmd\sigma+\hat\cF(U,\rho),
\eeq
where $\hat\cF$ has the form \eqref{e:Eng_V} and $\mrP:\mbbR\mapsto\mbbR$ is strictly convex with a non-degenerate minima at $\bar\rho_m.$   For positive, self-adjoint membrane mass and embedded agent gradients $\cG_m$ and $\cG_\rho$  the gradient flow takes the form
\beq
\label{e:IEF}
\begin{aligned}
 \frac{D\rho}{Dt} & = -\cG_\rho \,\partial_\rho \hat \cF, \\
 \frac{D\rho_m}{Dt}&=-\cG_m\partial_{\rho_m}\cF_{\eps} = -\frac{1}{\eps}\cG_m \mrP'(\rho_m),\\
  \mrV_{\eps}&= \hat\mrV +\frac{\kappa}{\eps}\left( \rho_m\mrP'-\mrP\right),
\end{aligned}
\eeq
on the periodic domain $\mbbS.$ Here $\hat\mrV=\hat\mrV(U,\rho)$ denotes the normal velocity induced by $\hat\cF$.  The system is coupled through the normal velocity which appears in the convective derivatives.
The  dissipation mechanism takes the form
\beq\label{e:Dissmech} \frac{d}{dt} \cF_{\eps}(U,\rho,\rho_m) = 
- \int_\mbbS \left(\frac{1}{\eps}\left|\cG_m^\frac12 \mrP'(\rho_m)\right|^2+\left|\cG_\rho^\frac12\partial_\rho\cF_\eps\right|^2 +\left|\mrV_{\eps}\right|^2 \right)\mrd \sigma\leq 0.
\eeq
We establish, formally in a general framework, and rigorously in a simplified framework that the $\eps\to0^+$ limit yields an incompressible evolution of the membrane mediated through a Fredholm operator constructed as identity plus compact, curvature-weighted, inverse-Helmholtz operator.

\subsection{Formal Analysis of Incompressible Limit} 
\label{s:Formal}
We fix the penalty term $\mrP$ in the generic form
$$ \mrP(\rho_m)=\frac{1}{2} (\rho_m-1)^2,$$
where the equilibrium density is scaled to $\rho_*=1.$  For simplicity we choose $\cG_m=-\Delta_s$ and use the co-moving frame, in which the tangential velocity $\mrW=0.$
Writing out the material derivative the gradient flow \eqref{e:IEF} takes the form 
\beq \label{e:IEF_explicit}
\begin{aligned}
 \partial_t\rho & =-\kappa\rho\mrV_\eps 
                        -\cG_\rho \,\partial_\rho \cF+\frac{\kappa^2\rho}{\eps}\frac{1-\rho_m^2}{2}, \\
 \partial_t\rho_m&=-\kappa\rho_m\mrV_\eps 
           +\frac{1}{\eps}\left(\Delta_s \rho_m +\frac{\kappa^2(\rho_m^2-\rho_m^3)}{2}\right),\\
  \mrV_{\eps}&=  \hat \mrV-\frac{\kappa}{\eps}\frac{1-\rho_m^2}{2}.
\end{aligned}
\eeq
Assuming the variables $(U,\rho,\rho_m)$ to be  functions of the fast time  $t_1=\eps^{-1}t$,  yields a leading order dynamic that is largely uncoupled and independent of $\mrV_\eps.$ Dropping $O(\eps)$ terms in the intrinsic variable evolution yields the  system,
\beq \label{e:IEF_LO}
\begin{aligned}
 \partial_{t_1}\rho & =\rho \kappa^2 \frac{1-\rho_m^2}{2}, \\
 \partial_{t_1}\rho_m&=\Delta_s \rho_m +\frac{\kappa^2(\rho_m-\rho_m^3)}{2},\\
  \partial_{t_1}\kappa&=  -\mrG \left(\kappa\frac{1-\rho_m^2}{2}\right),\\
  \partial_{t_1}g& = -g \kappa^2 \frac{1-\rho_m^2}{2}.
\end{aligned}
\eeq
  The combination of the convective and variational terms in the membrane density system have produced an Allen-Cahn flow with strong diffusion associated to a 
  curvature-weighted double-well potential
$$ \partial_{t_1} \rho_m=\Delta_s \rho_m -\kappa^2\mrF'(\rho_m),$$
where $F(\rho_m)=\frac18 (1-\rho_m^2)^2.$ 
For initial data $\rho_m>0$ the solutions tend to  $\rho_m\equiv 1$ so long as $\kappa$ is non-zero on a set of positive mass. 
The curvature equation is well posed if $0\leq\rho_m\leq 1.$ Values of $\rho_m>1$ lead to an ill-posed curvature equation which requires a regularization by higher-order differential terms from the normal velocity, see \cite{CKP-19} and \cite{CP_23} for examples of the regularization in formal and rigorous frameworks respectively. The first and fourth equations in \eqref{e:IEF_LO} yield the identity  $\partial_{t_1}(\rho g)=0,$ which shows that the
embedded densities $\rho$ are conserved under the fast flow. The density of $\rho$ within any labeled segment $\mbbS_0\subset\mbbS$ is conserved since changes in $\rho$ are balanced by reciprocal changes in arc-length $g$.  

At quasi-equilibrium the dynamics are driven by perturbations of $\rho_m$ from equilibrium.  To resolve this structure and its impact on the full system we expand $\rho_m=1+\eps\rho_{m,1}(t_1)$ in \eqref{e:IEF_explicit}.
The $\rho_m$-$\mrV_\eps$ equations form a closed sub-system which has the the leading order dynamics
\beq \label{e:IEF_rho1_LO}
\begin{aligned} 
    \partial_{t_1} \rho_{m,1}&= -\hat \mrV_\eps \kappa +(\Delta_s-\kappa^2) \rho_{m,1}, \\
    \mrV_\eps &= \rho_{m,1}\kappa +\hat \mrV.
\end{aligned}
\eeq
This yields a fast-slow decomposition. The membrane mass correction term $\rho_{m,1}$ equilibrates on the fast time-scale while the interface evolves through the normal velocity on the native $t\sim O(1)$ time scale.  This suggests that the intrinsic variables $U$ are frozen on the fast time-scale and $\rho_{m,1}$ tends to a quasi-steady equilibrium that satisfies
\beq\label{e:rhom_quasi-stat} 
\rho_{m,1}= -\mrH^{-1} (\kappa \hat \mrV(U)),
\eeq
where the Helmholtz operator
$$\mrH(\mrU):= -\Delta_s +\kappa^2>0,$$
has a norm-bounded inverse, $\mrH^{-1}: L^2(\mbbS)\mapsto H_{\rm per}^2(\mbbS)$. 
In this quasi-steady regime the normal velocity reduces to an incompressible form $\mrV_\eps=\cI\hat\mrV$,
where the incompressibility operator
 \beq\label{e:cI-def}
\cI :=\mrI-\kappa\mrH^{-1}(\kappa\cdot\,)=\mrI-\kappa\left(-\Delta_s+\kappa^2\right)^{-1}(\kappa\cdot),
\eeq
is a curvature-weighted inverse Helmholtz relaxation.  In the limit $\eps\to0$ the extrinsic formulation relaxes to the incompressible evolution for 
$(U,\rho)$,
\beq \label{e:IC_GV}
\begin{aligned}
\frac{\hat D\rho}{\hat Dt} &= 
-\cG_\rho \,\partial_\rho \cF,\\
\mrV&= \cI\,\hat \mrV,
\end{aligned}
\eeq
where $\frac{\hat D}{\hat D t}$ is the material velocity associated to $(\cI\hat\mrV,\hat\mrW)$ and $\hat\mrW$ is a tangential velocity induced by 
$\cI\hat\mrV.$ The evolution of the energy $\hat\cF$ under the flow satisfies
$$ \frac{d}{dt} \hat\cF(U,\rho)= - \int_\mbbS \left(\left|\cG_\rho^\frac12\partial_\rho\hat \cF\right|^2 + (\cI \hat \mrV)\hat\mrV  \right)\mrd \sigma.$$
The operator $\cI=\cI(U):L^2(\mbbS)\mapsto L^2(\mbbS) $ is a self-adjoint and Fredholm, indeed it is a compact perturbation of the identity.
The system \eqref{e:IC_GV} is a gradient flow of the original energy $\hat\cF$, if and only if $\cI$ is non-negative. We establish this in the proposition below.

\begin{prop}
\label{p:cI-pos}
Suppose that $U\in\cA(\mbbS)$ is smooth and can be smoothly deformed within $\cA$ into a representation of a circular interface. Then the incompressibility operator $\cI=\cI(U):H^1_{\rm per}(\mbbS)\mapsto H^1_{\rm per}(\mbbS)$ defined in \eqref{e:cI-def} is non-negative with $\ker(\cI)=\Span\{\kappa\}.$
\end{prop}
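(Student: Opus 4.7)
The plan is a spectral homotopy argument: verify $\cI\ge 0$ explicitly on a circle and then transport this property along the given smooth deformation, using that $\ker\cI$ remains one-dimensional throughout. For the setup, write $\cI = \mrI - \cK$ with $\cK v := \kappa\,\mrH^{-1}(\kappa v)$. The closure constraint $\int_\mbbS \kappa\,\mrd\sigma = 2\pi$ prevents $\kappa\equiv 0$, so $\mrH = -\Delta_s+\kappa^2$ is a strictly positive self-adjoint operator on $L^2(\mbbS)$ with bounded inverse mapping into $H^2(\mbbS)$. Composing with the compact embedding $H^2\hookrightarrow L^2$ shows $\cK$ is compact, and the symmetry $\langle \kappa\mrH^{-1}(\kappa u),v\rangle = \langle u,\kappa\mrH^{-1}(\kappa v)\rangle$ is immediate from self-adjointness of $\mrH^{-1}$. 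Hence $\cI$ is a self-adjoint compact perturbation of the identity whose spectrum consists of discrete eigenvalues accumulating only at $1$.

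Next I compute $\ker\cI$ directly. If $\cI v = 0$, set $w := \mrH^{-1}(\kappa v)$. Then $\mrH w = \kappa v$ and $v = \kappa w$, and substituting the second into the first yields $-\Delta_s w + \kappa^2 w = \kappa^2 w$, i.e.\ $\Delta_s w = 0$, so $w$ is constant by periodicity and $v\in\Span\{\kappa\}$. Conversely $\mrH(1) = \kappa^2$ gives $\mrH^{-1}(\kappa\cdot\kappa) = 1$, hence $\cI\kappa = 0$. Since $\kappa\not\equiv 0$, this shows $\dim\ker\cI(U) = 1$ for every admissible smooth $U\in\cA$.

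For the base case, on a circle of arc-length $L$ the curvature $\kappa_c = 2\pi/L$ is constant and Fourier diagonalization of both $\mrH$ and multiplication by $\kappa_c$ yields the explicit eigenvalues
\[
\lambda_n(\cI) = \frac{(2\pi n/L)^2}{(2\pi n/L)^2 + \kappa_c^2}\geq 0, \qquad n\in\mbbZ,
\]
with $\lambda_0 = 0$ corresponding to constants, i.e.\ to $\Span\{\kappa_c\}$. Thus $\cI\ge 0$ on a circle.

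Finally I run the homotopy. Let $\tau\in[0,1]\mapsto U_\tau\in\cA$ be the hypothesized smooth deformation with $U_0 = U$ and $U_1$ a circle. Smoothness of $U_\tau$ implies $\kappa_\tau$ varies continuously in $C^0(\mbbS)$, and a Neumann-series/resolvent-continuity argument then yields that $\mrH(U_\tau)^{-1}$ varies continuously in operator norm, so $\tau\mapsto \cI(U_\tau)$ is norm-continuous into the bounded self-adjoint operators on $L^2(\mbbS)$. Define $N_-(\tau) := \#\{\text{negative eigenvalues of }\cI(U_\tau)\text{ counted with multiplicity}\}$. Uniform boundedness of $\|\cK(U_\tau)\|$ on $[0,1]$ traps all eigenvalues in a fixed compact interval, so $N_-(\tau)$ is always finite. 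By spectral continuity, $N_-$ can only change at a $\tau^*$ where some eigenvalue of $\cI(U_{\tau^*})$ equals $0$, but at any such $\tau^*$ the kernel would be at least two-dimensional, contradicting the kernel computation above. Hence $N_-$ is constant on $[0,1]$, and $N_-(1) = 0$ from the circle case forces $N_-(0) = 0$, proving $\cI(U)\ge 0$ with $\ker\cI(U) = \Span\{\kappa\}$. The main obstacle is precisely this spectral-flow step — ruling out changes in the negative-eigenvalue count by any mechanism other than a crossing through $0$ — and it is handled by combining uniform compactness of $\cK(U_\tau)$ (which pins the spectrum in a fixed bounded interval) with the rigidly one-dimensional kernel (which forbids crossings at zero).
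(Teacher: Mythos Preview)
Your proof is correct and follows essentially the same strategy as the paper's: verify the circle case explicitly, show $\ker\cI=\Span\{\kappa\}$ for every admissible $U$ by reducing to $\Delta_s w=0$, and then run a spectral continuity/homotopy argument. Your kernel computation via $w:=\mrH^{-1}(\kappa v)$ is exactly what the paper does (it writes the same object as $u/\kappa$), and your spectral-flow step is a more explicit version of the paper's appeal to continuous dependence of the spectrum.
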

\begin{proof}
It is easy to verify that for each $U\in\cA$,  $\kappa\in\ker(\cI(U))$. Indeed this fact implies that $\cR(\cI)\bot\kappa$ which enforces preservation of total curve length under the gradient flow.   We first consider $R\in\mbbR_+$ and define  $U_R:=(R^{-1},2\pi R)^t$ whose image $\Gamma(U)$ is a circle of radius $R$. The associated incompressibility operator takes the form
$$ \cI_R=\mrI - R^{-2}(-(2\pi R)^{-2}\partial_s^2+R^{-2})^{-1}= \mrI -(-(2\pi)^{-2}\partial_s^2+1)^{-1}.$$
In this case the kernel, $\ker(\cI_R)=\Span\{1\}$ and $\cI_R\geq \frac12$ on $\{1\}^\bot.$ For an arbitrary smooth $U$ the Helmholtz operator $\mrH=\mrH_U>0$ and hence has a bounded inverse map $\mrH^{-1}: L^2(\mbbS)\mapsto H^2_{\rm per}(\mbbS)$. The  operator $\cI(U)$ is self-adjoint Fredholm with real spectrum that accumulates at 1 and deforms continuously with smooth changes in $U$ (\tcb{cite Kato}).  If $\hat U:\mbbS\times[0,1]\mapsto\mbbR^2$ represents a smooth deformation of intrinsic variables with $\hat U(\cdot,0)=U_R$ to $\hat U(\cdot,1)=U$, then either  $\cI(\hat U(\tau))>0$ on 
$\{\kappa(\tau)\}^\bot$ or dim(ker($\cI(U(\tau')))\geq2$ for some $\tau'\in(0,\tau].$ 

For any $U\in\cA$ if $\cI u=0$ for some $u\in L^2(\mbbS)$ then 
$$ u=\kappa\mrH^{-1}(\kappa u),$$
where the operators $\mrH$ and $\mrH^{-1}$ are defined subject to periodic boundary conditions.  This implies that we can define $\frac{u}{\kappa}\in \cR(\mrH^{-1})\subset H^2_{\rm per}(\mbbS)$. Acting on this function with $\mrH$ yields the relation
$$
(-\Delta_s+\kappa^2)\frac{u}{\kappa}=\kappa u, \hspace{0.5in}\implies \hspace{0.5in}
\Delta_s \frac{u}{\kappa}=0.
$$
A function $\phi\in H^2(\mbbS)$ is in the kernel of $\Delta_s$ if it solves a second-order, linear, ordinary differential equation. There can be at most two linearly independent solutions, moreover they can be explicitly constructed. Taking the  intersection of this collection with $H^2_{\rm per}(\mbbS)$ yields
$$\ker(\Delta_s)= \Span\left\{1,\int_0^s \rmd\sigma\right\}\cap H^2_{\rm per}(\mbbS)= \Span\{1\},$$
since the second element cannot be periodic and hence cannot reside in $H^2_{\rm per}(\mbbS).$
We deduce that $\ker(\cI(U))=\Span\{\kappa\}$ with $\cI$ strictly positive on $\{\kappa\}^\bot$ so long as $U$ can be smoothly deformed from some $U_R$ on a path that resides within $\cA.$
\end{proof}

\begin{remark}
 We conjecture that all smooth $U\in\cA$ that do not self-intersect can be smoothly deformed into $U_R$ on a path that resides in $\cA$. While  self-intersect does not universally preclude smooth deformation into a circle,  a figure-$8$ curve cannot be smoothly deformed into a circle.
\end{remark}

To interpret the incompressible flow \eqref{e:IC_GV} in terms of a membrane mass flux it is convenient to introduce the limiting membrane excess density
\beq\label{e:LM_rho}
\bar\rho_m:= \lim_{\eps\to 0} \frac{\rho_m-1}{\eps}.
\eeq
The relation \eqref{e:rhom_quasi-stat} establishes a functional connection between  $\bar\rho_m$ and the corresponding $U\in\cA,$
\beq\label{e:barRM-def}
\bar\rho_m(U)= -\mrH^{-1}\kappa\hat\mrV(U).
\eeq
Let  $J_m=-\nabla_s$ be a linear operator that generates the limiting membrane diffusive mass flux 
$\mrJ_m\rho_m$.

\begin{prop}
\label{p:Flux}
Let $U$ solve the reduced flow \eqref{e:IC_GV}. For any measurable $\mbbS_0\subset\mbbS$ the change in length of the image of $\mbbS_0$ under the flow  equals the total mass flux $\mrJ_m \bar\rho_m$ out of the boundary of $\mbbS_0.$  
\end{prop}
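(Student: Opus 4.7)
The plan is to express $\tfrac{d}{dt}|\gamma(\mbbS_0,t)|$ through the metric evolution, substitute the incompressible normal velocity, and recognize the resulting integrand as a surface divergence of the membrane flux $\mrJ_m\bar\rho_m$, so that a one-dimensional Stokes identity produces the claimed boundary term.

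First I would adopt the co-moving frame $\mrW\equiv 0$, under which $\mbbS_0$ labels a material sub-arc of the evolving curve. The metric equation in \eqref{e:In-Ext_VF}--\eqref{e:cM_def} then reduces to $\partial_t g = g\kappa\mrV$, so that
\[
\frac{d}{dt}|\gamma(\mbbS_0,t)| \;=\; \int_{\mbbS_0}\partial_t g\,\mrd s \;=\; \int_{\mbbS_0}\kappa\mrV\,\mrd\sigma.
\]

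Next I would convert $\kappa\mrV$ into a surface divergence using the characterization of $\bar\rho_m$. By \eqref{e:cI-def} and \eqref{e:barRM-def} the incompressible normal velocity is $\mrV = \cI\hat\mrV = \hat\mrV + \kappa\bar\rho_m$, while $\bar\rho_m$ itself satisfies $\mrH\bar\rho_m = -\kappa\hat\mrV$, i.e.\ $\kappa^2\bar\rho_m - \Delta_s\bar\rho_m = -\kappa\hat\mrV$. Combining these two identities cancels the $\kappa\hat\mrV$ terms and yields
\[
\kappa\mrV \;=\; \kappa\hat\mrV + \kappa^2\bar\rho_m \;=\; \Delta_s\bar\rho_m \;=\; -\nabla_s(\mrJ_m\bar\rho_m),
\]
where the last equality uses $\mrJ_m = -\nabla_s$. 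Applying the one-dimensional divergence identity $\int_{\mbbS_0}\nabla_s X\,\mrd\sigma = \hat n\cdot X\big|_{\partial\mbbS_0}$, with outward normal $\hat n\in\{\pm1\}$, gives
\[
\frac{d}{dt}|\gamma(\mbbS_0,t)| \;=\; -\bigl(\hat n\cdot\mrJ_m\bar\rho_m\bigr)\Big|_{\partial\mbbS_0},
\]
which is exactly the signed net flux of $\mrJ_m\bar\rho_m$ through $\partial\mbbS_0$ --- with sign chosen so that an increase in arc-length corresponds to membrane material being received from the complement of $\mbbS_0$.

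I do not anticipate any genuine analytic difficulty: $\bar\rho_m$ lies in $H^2_{\rm per}(\mbbS)$ by the mapping property of $\mrH^{-1}$ established in the proof of Proposition \ref{p:cI-pos}, so every integration by parts is legitimate. The main bookkeeping subtlety is the sign convention relating the scalar $\mrJ_m = -\nabla_s$ to the phrase ``flux out of the boundary''. For a tangential-velocity gauge other than co-moving, Step 1 acquires a boundary term $[\mrW]_{\partial\mbbS_0}$; this is cancelled by the tangential advection of $\bar\rho_m$ carried by the material derivative at the boundary, so the identity persists in the Lagrangian interpretation of $\mbbS_0$, consistent with the parameterization-independence property of Lemma \ref{l:PIP}.
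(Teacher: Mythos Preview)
Your proposal is correct and follows essentially the same approach as the paper: co-moving gauge, the metric evolution $\partial_t g = g\kappa\cI\hat\mrV$, the key identity $\kappa\cI\hat\mrV = \Delta_s\bar\rho_m$ derived from \eqref{e:barRM-def}, and the one-dimensional divergence theorem. You in fact go a bit further than the paper by flagging the sign convention for ``flux out'' and sketching the gauge-independence argument.
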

\begin{proof}
Let $U$ solve \eqref{e:IC_GV} subject to the original gradient normal velocity $\hat \mrV$. Consider the flow of $U$ under the co-moving gauge $\mbbV=(\cI\hat\mrV,0)^t$ for which $\partial_t g=g\kappa\cI\hat\mrV.$
The evolution of the size of the image of $\mbbS_0$ satisfies
$$\begin{aligned}
\frac{d}{dt}|\gamma(\mbbS_0)| =\frac{d}{dt}\int_{\mbbS_0}  \mrd \sigma &= \int_{\mbbS_0} \partial_t g \,\mrd s = \int_{\mbbS_0}\kappa \cI\hat\mrV \,\mrd \sigma.
\end{aligned}
$$
From \eqref{e:barRM-def} the definition of $\mrH$ implies the relation
\beq\label{e:Lap2kIV}
\Delta_s \bar\rho_m= \kappa^2\bar\rho_m+\kappa\hat\mrV = \kappa(\hat\mrV -\kappa\mrH^{-1}(\kappa\hat\mrV))=\kappa\cI\hat\mrV.
\eeq


This allows us to rewrite the image length evolution as
\beq\label{e:psi_t2}
\frac{d}{dt}|\gamma(\mbbS_0)|=  \int_\mbbS \Delta_s \bar\rho_{m}\,\mrd\sigma
=\int_{\partial \mbbS_0} \mrJ_m\bar\rho_m\cdot \hat n\,\mrd \cH^0,
\eeq
where $\cH^0$ is zero-dimensional Hausdorff measure and $\hat n:\partial \mbbS_0\mapsto \{\pm 1\},$
is the exterior normal to $\mbbS_0$.
\end{proof}

\begin{remark}
The result of Proposition\,\ref{p:Flux} can be extended to more general membrane gradient operators with the factored form $$\cG_m=J_m^\dag J_m,$$
where $J_m$ is a linear operator and $J_m^\dag$ is its $L^2(\mbbS)$ adjoint. The limiting membrane diffusive mass flux is given by $\mrJ_m\rho_m$. For a weighted dissipation operator $\cG_m=-\nabla_s( w(s)\nabla_s\smbull)$ with a fixed, positive weight $w$, the the flux operator $J_m=-\sqrt{w} \nabla_s$ with adjoint $J_m^\dag=\nabla_s(\sqrt{w}\smbull).$
\end{remark}

\section{Rigorous analysis of Incompressible Willmore Type  Flows}
We present a rigorous analysis of the long-time asymptotics of solutions of the system \eqref{e:IEF_explicit} subject to constraints on the form of the system and and to assumptions of global bounds on the solutions. For brevity we consider base energies $\hat\cF$ that depend only upon the intrinsic coordinates $U$ and not on any embedded agents. Those have limited impact on the structure of the proof but complicate the statement of the assumptions on global bounds of solutions. 

The $L^2$ gradient flow associated a $\rho$-independent energy $\hat\cF$ is prescribed through the normal velocity via the map
\beq\label{e:U_GF}
\hat\mrV(U)=-\cM^\dag(U) \nabla_\mrI \hat\cF,
\eeq
 which can be supplemented with a choice of tangential velocity map $ \mrW=\cT(\mrV,U)$. We denote $\hat\mrW= \cT(\hat \mrV,U)$ and form a combined extrinsic velocity $\hat\mbbV=(\hat\mrV,\hat\mrW)^t.$
 We require that the base energy induces a normal velocity and a $U$ gradient of the normal velocity that map bounded sets to bounded sets. More specifically we assume that the combined velocity map $\hat\mbbV(U):=(\hat\mrV(U),\cT(\hat\mrV(U),U))^t$ satisfies
\beq\label{e:hatV-bnd}
\begin{aligned}
\hat\mbbV&:[H^{k}]^2\mapsto [H^{k-2}]^2,\\
\nabla_U\hat\mbbV(U)&: [H^k]^2\mapsto [H^{k-2}]^2, 
\end{aligned}
\eeq
for $k=1, 2, 3, 4, 5.$ Generically the tangential map is smoothing and does not introduce additional regularity constraints. Since the energy $\hat\cF$ is independent of $\eps$  the norm bounds are  independent of $\eps.$

In Section\,\ref{s:Incomp_Willmore}, we establish that a base energy $\hat\cF$ whose density depends only upon $U$, and not upon surface gradients of $U$ satisfies these conditions. This class  includes the quadratic energy density that generates the Willmore flow.  On the other hand, we broaden the class of flows by removing the constraint that $\hat\mrV$ arise from a gradient flow as in \eqref{e:U_GF} but preserve the assumptions \eqref{e:hatV-bnd} on its properties as a map.  The gradient flow structure is not essential to the proof once the assumptions on the global bounds on the solutions are in place. 
 
The extrinsic evolution system takes the form
\begin{align}
   \mrV&= 
   \frac{\kappa}{2\eps}(\rho_m^2-1)+\hat\mrV(U), \label{e:Full_V} \\
   \frac{D\rho_m}{Dt}&= \frac{1}{\eps}\Delta_s \rho_m,\label{e:Full_rho}
 \end{align}
 supplemented with the tangential velocity    $\mrW=\cT(\mrV,U).$
Forming the extrinsic vector field $\mbbV:=(\mrV,\mrW)^t,$  the extrinsic velocity $\mbbV=\mbbV(U,\rho_m)$ can be viewed as a function of $(U,\rho_m)$ and replaced with the intrinsic evolution 
 \beq
 \label{e:Full_U}
 \partial_t U=\cM(U) \mbbV(U,\rho_m),
 \eeq 
 coupled to \eqref{e:Full_rho}.
 
\subsection{Convergence to the Incompressible Manifold}
A natural goal is to compare the evolution of the full system  in its 
intrinsic form \eqref{e:Full_rho}-\eqref{e:Full_U}, to the reduced flow
\beq\label{e:INC_U}
\partial_t U = \cM(U) \mbbV_\cI (U),
\eeq
where the incompressible extrinsic velocity is given by
\beq
\label{e:INC_mbbV}
\mbbV_\cI(U):= \bpm \cI\hat\mrV(U) \cr
                     \cT(\cI\hat\mrV,U) \epm,
\eeq                     
through the incompressibility operator $\cI$ defined in \eqref{e:cI-def}.
However, we do not make a direct comparison of the norms of differences between solutions of the two systems, as those will diverge in time. Rather we establish an asymptotic stability result that illuminates the role of the incompressible manifold $\mrM_\rho$ as an organizing structure for the full flow.  Specifically we recall the excess membrane density $\bar\rho_m$ from \eqref{e:barRM-def} and introduce the incompressible manifold as the graph of $\bar\rho_m$ over $\cA,$
\beq\label{e:IMan}
\mrM_\rho:= \left\{ (U,1+\eps\bar\rho_m(U))\,\bigl|\, U\in \cA\right\},
\eeq
and the distance of a function $(U,\rho_m)$ to the incompressible manifold
\beq\label{e:dMPhi}
d_\mrM(U,\rho_m):= \|\rho_m-(1+\eps\bar\rho_m(U))\|_{H^2},
\eeq
the  extrinsic velocity residual associated to $(U,\rho_m)$
\beq\label{e:EV_res}
\mbbV^R :=\mbbV(U,\rho_m)-\mbbV_\cI(U),
\eeq
and the intrinsic velocity residual
\beq\label{e:IV_res}
\partial_t U^R(U,\rho_m) :=\cM(U)\mbbV^R.
\eeq

The intrinsic system is supplemented with initial data $U_0\in\cA$. 
 We address families of solutions $\{(U,\rho_m)\}_{\eps>0}$ corresponding to $\eps$ independent initial data 
 $\{U^0,\rho_m^0\}_{\eps>0}$ that are well-prepared in the sense that there exists $T>0$, independent of $\eps$, such that the corresponding solutions satisfy
 \beq\label{e:MAss}(U(\cdot;\eps),\rho_m(\cdot,\eps))^t\in L^\infty\Bigl([0,T]; [H^{5}]^2\times H^{2}\Bigr),
 \eeq
 where the $L^\infty$-temporal norm bound is independent of $\eps\in(0,\eps_0)$ for $\eps_0$ sufficiently small. Determing the class of flows and initial data with this property will depend upon the details of each system, this analysis is outside the scope of this work.
 
The following theorem shows that solutions of the full system that  satisfy \eqref{e:MAss} converge into an $\eps$ asymptotically thin region about the incompressible manifold $\mrM_\rho.$
The main issue is to control the map $F=\cM(U)\mbbV(U):U\mapsto \partial_t U$ we need that
$$ F: [H^{5}]^2\mapsto [H^{1}]^2$$
is uniformly bounded independent of $\eps$. The proof does not require $\eps$ independent bounds on $\partial_t\rho_m.$ 

\begin{thm}
\label{t:ICM}
 There exists $\eps_0,\delta, \nu, C>0$ such that for all  $\eps\in (0,\eps_0)$ the following hold. 
 Let $\{(U,\rho_m)\}_{\eps>0}$ satisfy \eqref{e:MAss} for some $T>0$ for initial data $(U^0,\rho^0_m)$ that satisfy  $d_\cM(0):=d_\cM(U_0,\rho_0)\leq \delta$.  Then the distance of the solution to the incompressible manifold \eqref{e:dMPhi} decays exponentially, satisfying the estimate
\beq\label{e:IM_B1}
d_\mrM(U,\rho_m) \leq d_\mrM(U_0,\rho_0) e^{-\nu t/\eps } +C\eps^2,
\eeq
for all $t\in[0,T].$
Once a solution satisfies $d_\mrM(U,\rho_m)\leq C\eps^2$, the  residuals are uniformly bounded
\beq\label{e:Resid_bnd}
\|\mbbV^R\|_{H^{2}}+\|\partial_t U^R\|_{L^2} \leq C \eps.
\eeq
\end{thm}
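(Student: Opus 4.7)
The plan is to parametrize the deviation from the incompressible manifold by $\phi := \rho_m - 1 - \eps\bar\rho_m(U)$, so that $d_\mrM(U,\rho_m) = \|\phi\|_{H^2}$, and to derive a parabolic equation for $\phi$ with a dominant $O(1/\eps)$ damping. Substituting $\rho_m = 1 + \eps\bar\rho_m + \phi$ into \eqref{e:Full_V} and expanding $\rho_m^2-1 = 2\eps\bar\rho_m + 2\phi + (\eps\bar\rho_m + \phi)^2$ gives
\begin{equation*}
\mrV = \cI\hat\mrV(U) + \frac{\kappa\phi}{\eps} + \frac{\eps\kappa\bar\rho_m^2}{2} + \kappa\bar\rho_m\phi + \frac{\kappa\phi^2}{2\eps}.
\end{equation*}
Using the identity $\Delta_s\bar\rho_m = \kappa\,\cI\hat\mrV$ from \eqref{e:Lap2kIV}, the $O(1)$-in-$\eps$ forcing in the $\rho_m$ material-derivative equation cancels exactly, leaving
\begin{equation*}
\phi_t = -\frac{1}{\eps}\mrH\phi + \cR(U,\phi), \qquad \mrH := -\Delta_s + \kappa^2,
\end{equation*}
where $\mrH$ is strictly positive on $L^2(\mbbS)$ whenever $\kappa \not\equiv 0$.

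The residual $\cR$ decomposes into three groups: $O(\eps)$ ``forcing'' terms from $\eps\kappa\bar\rho_m^2$, $\eps\mrW\nabla_s\bar\rho_m$, and $\eps(\nabla_U\bar\rho_m)\cdot\cM\mbbV$; linear lower-order terms such as $\kappa\bar\rho_m\phi$; and genuinely nonlinear terms bounded by $C\|\phi\|_{H^2}^2/\eps$, arising from $\kappa^2\phi^2/\eps$ and from $\kappa\mrV\phi$ after substitution of the singular part of $\mrV$. The a priori assumption \eqref{e:MAss}, together with the mapping properties \eqref{e:hatV-bnd} of $\hat\mbbV$, provides uniform-in-$\eps$ bounds on $\kappa, \hat\mrV, \bar\rho_m = -\mrH^{-1}\kappa\hat\mrV$, and $\partial_t U = \cM\mbbV$ in high-regularity norms. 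Since $H^2(\mbbS)$ is a Banach algebra, these structural bounds on $\cR$ follow directly.

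The main step is then an $H^2$ energy estimate. Coercivity of $\mrH$ on $L^2$ lifts to $H^2$ after controlling commutators with $\partial_s^j$ via $L^\infty$ control of $\nabla_s^j\kappa$, $j \leq 3$, which is furnished by $U \in H^5$. This yields
\begin{equation*}
\frac{1}{2}\frac{d}{dt}\|\phi\|_{H^2}^2 \leq -\frac{c_0}{\eps}\|\phi\|_{H^2}^2 + \frac{C}{\eps}\|\phi\|_{H^2}^3 + C\eps\|\phi\|_{H^2}.
\end{equation*}
A bootstrap with $\|\phi\|_{H^2} \leq \delta$, $\delta < c_0/(2C)$, absorbs the cubic term into the damping; Young's inequality absorbs the forcing, yielding $\frac{d}{dt}\|\phi\|_{H^2}^2 \leq -(c_0/2\eps)\|\phi\|_{H^2}^2 + C\eps^3$. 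Grönwall gives exactly \eqref{e:IM_B1}, and for $\eps_0$ small the bootstrap is self-consistently preserved. Once $\|\phi\|_{H^2} \leq C\eps^2$, substitution into $\mbbV^R = \mbbV - \mbbV_\cI$ shows the normal component equals $\kappa\phi/\eps + O(\eps) = O(\eps)$ in $H^2$; the tangential residual inherits the same bound through $\cT$; and $\partial_t U^R = \cM\mbbV^R$ loses at most two derivatives, producing the $L^2$ estimate in \eqref{e:Resid_bnd}.

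The main obstacle is the transport term $\mrW\nabla_s\phi$ in $\cR$: for gauges like scaled arc-length \eqref{e:SAL} the tangential velocity $\mrW = \cT(\mrV,U)$ inherits a singular $\phi/\eps$ contribution from $\mrV$, creating an apparent loss-of-derivative nonlinearity. It is defused either by exploiting the one-derivative smoothing of the antiderivative operator in $\cT$, or by integration by parts transferring $\nabla_s$ off $\phi$ at the cost of $\nabla_s\mrW$ factors, which are bounded by \eqref{e:MAss}. A secondary subtlety is the uniform $H^2$-coercivity of $\mrH$ along the solution family, which is precisely why regularity at the level $U \in H^5$ is needed.
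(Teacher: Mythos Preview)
Your approach is correct and essentially the same as the paper's: decompose $\rho_m = 1 + \eps\bar\rho_m + (\text{error})$, derive a parabolic equation driven by $-\eps^{-1}\mrH$, run an $H^2$-level energy estimate under a continuation hypothesis, and then read off the residual bounds. The paper works with the scaled variable $w = \phi/\eps$ and takes $\|\mrH w\|_{L^2}^2$ as the energy rather than your $\|\phi\|_{H^2}^2$; these are equivalent norms (the paper records the $H^2$-coercivity of $\mrH$ as a separate lemma), and the choice merely trades your commutator bookkeeping for a single $\mrH_t$ term.

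One point needs correction. The hypothesis \eqref{e:MAss} bounds $U$ and $\rho_m$ uniformly in space only; it does not bound $\partial_t U$. Since $\mrV$ carries $\kappa\phi/\eps$, the quantity $\partial_t U = \cM\mbbV$ is \emph{not} uniform in $\eps$ and scales like $1 + \|\phi\|_{H^2}/\eps$. This feeds directly into the term $\eps\partial_t\bar\rho_m$ in your residual, which the paper singles out as the most delicate piece rather than the transport term you flag: bounding it in $H^1$ invokes the mapping property of $\nabla_U\hat\mrV$ from \eqref{e:hatV-bnd} and produces $\|\phi\|_{H^3}$ contributions that must be absorbed into the stronger damping $-\eps^{-1}\|\mrH^{3/2}w\|_{L^2}^2 \sim -\eps^{-1}\|\phi\|_{H^3}^2$, not merely the $-c_0\eps^{-1}\|\phi\|_{H^2}^2$ you display. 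Your differential inequality is therefore already the post-absorption form; the paper makes this parabolic smoothing step explicit. The overall architecture is unaffected.
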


\begin{proof} 

For notational simplicity  we suppress the $\eps$ dependence of $(U,\rho).$ We  decompose the membrane density $\rho_m$ into an excess density term $\bar\rho_m=\bar\rho_m(U)$, through the relation introduced in \eqref{e:barRM-def}, plus a scaled error $w,$
\begin{align}
\label{e:rho_decomp}
\rho = 1 + \eps \bar\rho_m + \eps w.
\end{align}
The scaling of the error $w$ is convenient for grouping terms. The assumption $d_\mrM(U_0,\rho_0)<\delta/\eps$ only implies that $\|w_0\|_{H^2}\leq \delta/\eps.$
Lemma\,\ref{l:H2_coerc} establishes that $\mrH$ is uniformly $H^2$-coercive  for all $U$ generated by \eqref{e:Full_rho}-\eqref{e:Full_U}. This implies a uniform bound on the inverse map $\mrH^{-1}:L^2(\mbbS)\mapsto H^{2}(\mbbS).$
This result and the assumptions on $(U,\rho_m)$ and $\hat\mrV$ imply that $\bar\rho_m\in L^\infty\bigl([0,T]; H^5(\mbbS)\bigr)$ with an $\eps$-independent bound.

Substitute the decomposition \eqref{e:rho_decomp} for $\rho$ into the normal velocity equation \eqref{e:Full_V}, we have
\begin{align}
\label{e:V_resid}
\mrV&
=\cI\hat \mrV +\kappa w + \eps\frac{\kappa}{2} (\bar\rho_m +  w)^2.
\end{align}
Substituting the $\rho$ decomposition \eqref{e:rho_decomp} and the expression for the material derivative into the $\rho$ evolution \eqref{e:Full_rho} yields
\begin{align}\label{Drhosub}
\varepsilon \partial_t \bar\rho_m + \varepsilon \partial_t w 
= \Delta_s \bar\rho_m -\kappa\mrV +  \Delta_s w - \eps \left(\kappa\mrV(\bar\rho_m+w)-\mrW \nabla_s(\bar\rho_m+w)\right).
\end{align}
From its definition  $$\Delta_s \bar\rho_m = \kappa^2 \bar\rho_m + \kappa \hat\mrV=\kappa \cI\hat\mrV.$$ 
Using this expression for $\Delta_s\bar\rho_m$ and \eqref{e:V_resid} for $\mrV$ on the left-hand side cancels the $\cI\hat\mrV$ term and yields the evolution equation
\beq \label{e:w-error}
\partial_t w = -\frac{1}{\varepsilon} \mrH w + \mrR,
\eeq
where we introduced the residual
\beq\label{e:Res_def}
\mrR(w) :=\partial_t \bar\rho_m +\kappa\mrV(\bar\rho_m+w)-\mrW \nabla_s(\bar\rho_m+w)+\eps\frac{\kappa^2}{2}(\bar\rho_m+w)^2.
\eeq

To bound $w$ we start with the identity
\beq\label{e:TV_mist}
\begin{aligned}
\frac12 \frac{d}{dt} \|\mrH w\|_{L^2}^2 &= \langle \mrH w_t,\mrH w\rangle + \frac{1}{2}\left \langle (\mrH w)^2,\frac {\partial_t g}{g} \right\rangle+\langle \mrH_t w,\mrH w\rangle,\\
\end{aligned}
\eeq
where we introduced the time derivative of $\mrH,$
\beq\label{e:Ht-def}
    \mrH_t := \frac {\partial_t g}{g} \Delta_s +2\kappa\partial_t \kappa.
\eeq
We further introduce $\mrH^{\frac12}$, the positive square root of $\mrH,$ and take the $L^2$ inner product of the $w$ evolution equation with $\mrH^2 w,$ we find
\beq
\label{e:w_Eng_est}
\begin{aligned}
\frac12\frac{d}{dt}\|\mrH w\|_{L^2}^2 &= -\frac{1}{\eps}\langle \mrH^2 w, \mrH w\rangle +
\frac{1}{2}\left \langle (\mrH w)^2,\frac {\partial_t g}{g} \right\rangle+\langle \mrH_t w,\mrH w\rangle+\langle \mrH \mrR, \mrH w\rangle,\\
\leq& -\frac{1}{\eps} \|\mrH^\frac32 w\|_{L^2}^2+\frac12 \left|\frac{\partial_t g}{g}\right|\|\mrH w\|_{L^2}^2 +\|\mrH_t w\|_{L^2}\|\mrH w\|_{L^2}+\|\mrH^\frac12\mrR\|_{L^2} \|\mrH^\frac32 w\|_{L^2}.
\end{aligned}
\eeq

To fix the gauge we choose the tangential velocity
$\mrW=\cT(\mrV,U)$  as scaled arc length, satisfying the system
$$\nabla_s\mrW =-\kappa\mrV + \langle \kappa\mrV,1\rangle_{\mbbS}.$$
The reduced tangential velocity is defined as $\hat\mrW=\cT(\cI\mrV,U).$ 
In particular our assumptions imply that $\hat\mrW\in L^\infty([0,T];H^3(\mbbS))$.
In the scaled arc-length formulation $g=g(t)$, independent of $s\in\mbbS,$ which affords the simplification $\nabla_s=g^{-1}\partial_s$ and $\Delta_s=g^{-2}\partial_s^2.$
We record the following time derivative estimates
\begin{lemma}
\label{l:Ut_bnds}
Assuming the bounds \eqref{e:MAss} on $(U,\rho)$ we have the estimates
\beq\label{e:Ut_bdd}
\begin{aligned}
  \|\partial_t U\|_{L^2} +\left| \frac{\partial_t g}{g}\right| &\leq C\left(1+ \|w\|_{H^2} +\eps \|w\|_{H^2}^2\right),\\
\|\partial_t U\|_{H^1} & \leq C\left( 1+  \| w\|_{H^3} +\eps\|w\|_{H^3}\|w\|_{H^2}\right).
\end{aligned}
\eeq
\end{lemma}
\begin{proof}

From the form  \eqref{e:cM_def} of $\cM$ we have
$$ \|\partial_t U\|_{L^2} =\|\cM \mbbV \|_{L^2} \leq C\|\mbbV\|_{H^2}.$$
From \eqref{e:V_resid} we have 
$$\|\mrV\|_{H^2}\leq \|\cI\hat\mrV\|_{H^2}+\|w+\frac{\eps}{2}(\bar\rho_m+ w)^2\|_{H^2},$$
while
$$ \|\mrW\|_{H^1}=\|\cT(\mrV,U)\|_{H^1}\leq C\|\mrV\|_{L^2}$$
and is  easier to bound than $\|\mrV\|_{H^1}$. Since $\cI:H^2\mapsto\ H^2$ is norm bounded we  have the estimate
$$
\begin{aligned}
  \|\partial_t U\|_{L^2} &\leq C\left(\|\hat\mbbV\|_{H^2}+ \|w\|_{H^2} +\eps( 1+\|w\|_{H^2}\|w\|_{L^2}+\|w\|_{H^1}^2)\right).
  \end{aligned}
$$
  From the estimates \eqref{e:hatV-bnd} on $\hat\mrV$ and the bounds \eqref{e:MAss} on $(U,\rho)$ we obtain the $L^2$ estimate in \eqref{e:Ut_bdd}. The $H^1$ estimate on $\partial_t U$ follows from a similar argument and is omitted.
In the scaled arc-length formulation the $g$ evolution reduces to an ordinary differential equation
\begin{align}
    \frac{dg}{dt}=\int_{\mbbS}\kappa\mrV\rmd \sigma=g\int_\mbbS \kappa \mrV\rmd s.
    \end{align}
This yields the estimate
$$\left| \frac{\partial_t g}{g}\right| \leq \|\kappa \mrV\|_{L^\infty}\leq C \|\mrV\|_{H^1},$$
which has estimates that are qualitatively similar to, indeed more relaxed than, those on $\|\partial_t U\|_{L^2}.$
\end{proof}

\begin{lemma}
    \label{l:Ht}
    There exists $C>0$ such that for any solution $(U,\rho)$ that satisfies the bounds  \eqref{e:MAss},
     the operator $\mrH_t$ from \eqref{e:Ht-def} satisfies
\beq\label{e:HtL2-bound}
\begin{aligned}
\|\mrH_t u\|_{L^2}& \leq C \|u\|_{H^{2}}(1+\|w\|_{H^2}+\eps \|w\|_{H^2}^2).
\end{aligned}
\eeq
for each $t\in [0,T].$
\end{lemma}
\begin{proof}
From the form of $\mrH_t$ \eqref{e:Ht-def} and the bound \eqref{e:Ut_bdd} we estimate
$$\| (\partial_t g/g)\Delta_s u\|_{L^2} \leq C \|u\|_{H^{2}} (1+\|w\|_{H^2}+\eps\|w\|_{H^2}^2).$$
Similarly 
$$ \|\kappa\partial_t \kappa \,u\|_{L^2}\leq \|\partial_t U\|_{L^2} \|u\|_{L^2} \|\kappa\|_{L^\infty}\leq C\|u\|_{L^2} \|\partial_t U\|_{L^2}.$$
Applying  the estimates \eqref{e:Ut_bdd}
yields the bounds in \eqref{e:HtL2-bound}.
\end{proof}
 Returning to \eqref{e:w_Eng_est}, and applying the bounds in Lemma\,\ref{l:Ut_bnds} and \ref{l:Ht}, yields the estimates
 $$ \left|\frac{\partial_t g}{g}\right| \|\mrH w\|_{L^2}^2 + \|
\mrH_t w\|_{L^2}\|\mrH w\|_{L^2}\leq C(\|w\|_{H^2}^2+\|w\|_{H^2}^3+\eps \|w\|_{H^2}^4).$$
The key step in bounding the right-hand side of \eqref{e:w_Eng_est} is to control
$\partial_t\bar\rho_m$ inside of the residual $R$.
From its definition \eqref{e:barRM-def}, $\bar\rho_m$ satisfies
$$\mrH \bar\rho_m=-\kappa \hat\mrV(U),$$
and hence
$$ \mrH \partial_t \bar\rho_m +\mrH_t \bar\rho_m = -\partial_t \kappa \hat\mrV(U) +\kappa (\nabla_U\hat\mrV)\cdot \partial_t U.$$
Solving for $\partial_t \bar\rho_m$
$$ \partial_t \bar\rho_m = -\mrH^{-1}\mrH_t\,\bar\rho_m -\mrH^{-1}\left(\partial_t \kappa \hat\mrV(U)\right) + \mrH^{-1}\left(\kappa(\nabla_U\hat\mrV) \partial_t\right).$$
To bound $\|\partial_t \bar\rho_m\|_{H^1}$ we estimate
$$ \|\partial_t \bar\rho_m\|_{H^1}\leq C\left( \|\mrH^{-\frac12}\mrH_t \bar\rho_m\|_{L^2} + \|\mrH^{-\frac12}\partial_t U\|_{L^2}+ \|\mrH^{-\frac12} \kappa(\nabla_U\hat\mrV) \partial_t U\|_{L^2}\right).$$
The first term is uniformly bounded by assumptions \eqref{e:MAss} on $U.$  The second term is bounded by the $L^2$ norm of $\partial_t U$ given in \eqref{e:Ut_bdd}. The last term is the strongest. From assumption \eqref{e:hatV-bnd} on the properties of $\hat\mrV$ we have
$$ \left\|\mrH^{-\frac12}\left(\kappa \nabla_U\hat\mrV\, \partial_t U\right) \right\|_{L^2}\leq C\|\kappa \nabla_U\hat\mrV\, \partial_t U\|_{H^{-1}}\leq C\|\partial_t U\|_{H^1}.$$
Collectively the $H^1$ norm of $\partial_t \bar\rho_m$ is dominated by the $H^1$ norm of $\partial_t U,$
$$ \|\partial_t \bar\rho_m\|_{H^1}\leq C( 1+\|w\|_{H^3}+\eps \|w\|_{H^3}\|w\|_{H^2}).$$
The remaining terms in $\|\mrR\|_{H^1}$ satisfy milder estimates. Combining these bounds in \eqref{e:w_Eng_est} we obtain the energy estimate
\beq\label{e:w-est}
\begin{aligned}
\frac12\frac{d}{dt}\|\mrH w\|_{L^2}^2 &\leq -\frac{1}{\eps} \|\mrH^\frac32 w\|_{L^2}^2 + C\left(\|w\|_{H^2}^2+\|w\|_{H^2}^3 +\eps \|w\|_{H^2}^4\right) +\\
& \hspace{0.5in} C\left( 1+  \| w\|_{H^3} +\eps\|w\|_{H^3}\|w\|_{H^2}\right)\|w\|_{H^3}.
\end{aligned}
\eeq
Since the operator $\mrH^\frac k2$ induces a norm equivalent to the $H^k$-norm, we may convert the $H^k$ norms to equivalent induced norms in the positive terms. We subsequently subsume the smaller quadratic terms into the dominant negative term,  and use Young's inequality on the linear $\mrH^\frac32$ induced-norm term to obtain
\beq\label{e:w-est2}
\begin{aligned}
\frac12\frac{d}{dt}\|\mrH w\|_{L^2}^2 &\leq -\frac{1}{2\eps} \|\mrH^\frac32 w\|_{L^2}^2 +C\left(\eps+\|\mrH w\|_{L^2}^3+ 
\eps \|\mrH^\frac32 w\|_{L^2}^2(\|\mrH w\|_{L^2}+\|\mrH w\|_{L^2}^2)\right). 
\end{aligned}
\eeq
We make a continuation argument. Assume that
\beq\label{e:continuation}
\|\mrH w\|_{L^2}\leq \delta/\eps,
\eeq
for some $\delta$ independent of $\eps.$ For times $t>0$ for which this this holds, if $\delta$ is sufficiently small, we can absorb the positive $\eps$-scaled terms on the right-hand side into the negative term. Converting the negative term to a weaker $\mrH$ induced norm we again use the continuation bound, decreasing $\delta$ if need be, to subsume the cubic $\mrH$ induced-norm term into the negative quadratic $\mrH$ induced-norm term.  This yields a $\nu>0$, independent of $\eps$ for which
$$\frac{d}{dt}\|\mrH w\|_{L^2}^2 \leq -\frac{2\nu}{\eps} \|\mrH w\|_{L^2}^2 +C\eps.
$$
Since the $H^2$ norm is equivalent to the $\mrH$-induced norm, it follows immediately that the quantity $d_\mrM(t)=\eps \| w\|_{H^2}$ satisfies the bounds \eqref{e:IM_B1}. In particular the continuity estimate holds for all $t\in[0,T].$

From \eqref{e:V_resid} the extrinsic velocity residuals \eqref{e:EV_res}
admit the expansions
\beq\label{e:NT_res}
\begin{aligned}
\mrV_R &= \kappa w +\frac{\eps}{2}\kappa (\bar\rho_m+w)^2, \\
\nabla_s (\mrW_R)& = -\kappa \mrV_R+\langle\kappa \mrV_R,1\rangle.
\end{aligned}
\eeq
With the assumptions on $U$ plus the triangle inequality we have the estimates
\beq\label{e:NTres_est}
\begin{aligned}
\|\mrV_R \|_{H^2} & \leq C\left( \|w\|_{H^2}+ \eps \| w\|_{H^2}^2\right),\\
 \| \mrW_R \|_{H^2}& \leq C\left( \| w\|_{H^1}+\eps \| w\|_{H^1}^2\right).
\end{aligned}
\eeq
Once a solution has entered the invariant region  $\| w\|_{H^2} \leq C\eps,$ then the residual velocity bounds \eqref{e:Resid_bnd} follow directly.
\end{proof}

\subsection{Technical Lemmas}
This section contains definitions and
technical results needed in the proof of Theorem\,\ref{t:ICM}.
We introduce the metric-dependent norms
\beq\label{e:H^s_def}
\|u\|_{H^k}^2:= \int_\mbbS |\nabla_s^k u|^2+|u|^2\,\mrd \sigma
\eeq
for $k\in\mbbN_+.$
From the classical argument
$$ u^2(s_1)-u^2(s_2)=2\int_{s_1}^{s_2} u \nabla_su \mrd \sigma,$$
we obtain the estimate
$$ |u(s)|^2\leq \left(|\Gamma|^{-1}\|u\|_{L^2}+2\|\nabla_s u\|_{L^2}\right) \|u||_{L^2},$$
from which we infer that 
$$ \|u\|_{L^\infty} \leq \frac{C}{\sqrt{|\Gamma|}} \|u\|_{H^1},$$
and that each $H^k$ is an algebra for $k\geq 1.$

The first step is to establish the uniform coercivity of $\mrH.$
\begin{lemma}
\label{l:H_coerc}
For each $U\in\cA$ the spectrum of $\mrH(U)$ is real and non-negative. Fix $M>0$ in $\mbbR$ and form  the set $\cA_M\subset\cA$ comprised of $U$  whose image $\Gamma_U$ satisfies the length bound $|\Gamma_U|=\int_\mbbS \mrd\sigma \leq M.$  Then the spectrum of $\mrH(U)$ is uniformly bounded away from zero, that is there exists $\nu_1=\nu_1(M)$ given in \eqref{e:nu0-def} such that
\beq\label{e:H-L2coer}
\langle \mrH u,u\rangle \geq \nu_1 \|u\|_{H^1}^2,
\eeq
for all $U\in\cA_M$ and all $u\in H^1(\mbbS).$
In particular the spectrum satisfies  for all $U\in\cA_M.$ 
\end{lemma}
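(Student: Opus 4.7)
The plan is to treat self-adjointness/non-negativity, kernel triviality, and the quantitative coercivity bound separately. Self-adjointness of $\mrH(U)=-\Delta_s+\kappa^2$ on $L^2(\mbbS,\mrd\sigma)$ and non-negativity of the spectrum are immediate from
\begin{equation*}
\langle \mrH u, u\rangle = \|\nabla_s u\|_{L^2}^2 + \|\kappa u\|_{L^2}^2 \ge 0.
\end{equation*}
If $u\in\ker\mrH$, both terms vanish, so $u\equiv\bar u$ is constant and $\kappa\bar u\equiv 0$ almost everywhere. Since the closure condition \eqref{e:C2} forces $\int_\mbbS\kappa\,\mrd\sigma = 2\pi$, the curvature cannot vanish on a full-measure set, and hence $\bar u=0$.

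The quantitative step is the main content. I would split $u=\bar u+v$ with $\bar u=|\Gamma|^{-1}\int_\mbbS u\,\mrd\sigma$ and $\int_\mbbS v\,\mrd\sigma=0$, and lean on three ingredients: (i) Cauchy--Schwarz applied to the closure identity, $(2\pi)^2 = \bigl(\int_\mbbS\kappa\,\mrd\sigma\bigr)^2 \le |\Gamma|\,\|\kappa\|_{L^2}^2$, giving $\|\kappa\|_{L^2}^2\ge 4\pi^2/|\Gamma|$; (ii) the periodic Wirtinger inequality $\|v\|_{L^2}^2\le(|\Gamma|/2\pi)^2\|\nabla_s u\|_{L^2}^2$; and (iii) since the mean-zero continuous $v$ vanishes somewhere, $\|v\|_{L^\infty}^2 \le |\Gamma|\|\nabla_s v\|_{L^2}^2$ via the fundamental theorem and Cauchy--Schwarz.

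The device that couples these ingredients is to view $\int_\mbbS\kappa^2 u^2\,\mrd\sigma$ as a quadratic in $\bar u$ and complete the square:
\begin{equation*}
\int_\mbbS \kappa^2 u^2\,\mrd\sigma = \|\kappa\|_{L^2}^2(\bar u-\bar u_\ast)^2 + \Bigl[\int_\mbbS \kappa^2 v^2\,\mrd\sigma - \|\kappa\|_{L^2}^{-2}\Bigl(\int_\mbbS\kappa^2 v\,\mrd\sigma\Bigr)^2\Bigr],
\end{equation*}
where $\bar u_\ast:=-\|\kappa\|_{L^2}^{-2}\int_\mbbS\kappa^2 v\,\mrd\sigma$. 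The bracketed remainder is non-negative by Cauchy--Schwarz, and the same inequality gives $|\bar u_\ast|\le\|v\|_{L^\infty}$. Combining $\bar u^2\le 2(\bar u-\bar u_\ast)^2+2\|v\|_{L^\infty}^2$ with (i) and (iii) converts this into
\begin{equation*}
|\Gamma|\bar u^2 \le \frac{M^2}{2\pi^2}\int_\mbbS \kappa^2 u^2\,\mrd\sigma + 2M^2\|\nabla_s u\|_{L^2}^2,
\end{equation*}
and folding in the Wirtinger bound on $\|v\|_{L^2}^2$ yields $\|u\|_{L^2}^2\le C(M)\langle \mrH u,u\rangle$ for an explicit $C(M)$. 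Averaging this with the free bound $\langle \mrH u,u\rangle\ge\|\nabla_s u\|_{L^2}^2$ produces the claimed $H^1$-coercivity with $\nu_1=\tfrac12\min\bigl(1,\,1/C(M)\bigr)$.

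The principal obstacle is that $\|\kappa\|_{L^2}$ admits no a priori upper bound on $\cA_M$, so any argument requiring pointwise or $L^\infty$ control of $\kappa$ cannot succeed. The square-completion is exactly what tames this: it replaces the awkward cross term $2\bar u\int_\mbbS\kappa^2 v\,\mrd\sigma$ by the scale-invariant ratio $|\Gamma|/\|\kappa\|_{L^2}^2$, which the closure identity bounds above by $M^2/(4\pi^2)$. The closure constraint $\int_\mbbS\kappa\,\mrd\sigma=2\pi$ is therefore doing essential work, well beyond merely ruling out the kernel.
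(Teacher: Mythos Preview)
Your proof is correct and takes a genuinely different route from the paper's. Both arguments start from the same place---self-adjointness is immediate, and the closure identity $\int_\mbbS\kappa\,\mrd\sigma=2\pi$ together with Cauchy--Schwarz gives the lower bound $\|\kappa\|_{L^2}^2\ge 4\pi^2/|\Gamma|$---and both finish by the same $\tfrac12$-splitting to upgrade $L^2$ coercivity to $H^1$ coercivity. The core differs. The paper argues by contradiction: it normalizes $\|u\|_{L^2}=1$, introduces the level sets $S_\kappa=\{\kappa^2>2\pi^2/M^2\}$ and $S_u=\{u^2>1/(2M)\}$, shows $|u|$ must be small somewhere on $S_\kappa$ and large somewhere on $S_u$, and then invokes a calculus-of-variations lower bound on $\int|\nabla_s u|^2$ coming from the oscillation of $u$ between those values. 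Your approach is purely algebraic: the mean/mean-zero split $u=\bar u+v$, completion of the square in $\bar u$, and the Wirtinger and $L^\infty$ estimates on $v$ replace the set-theoretic reasoning entirely. Your argument is shorter, yields an explicit constant $C(M)$ of order $M^2$ (matching the paper's $\nu_0\sim c/M^2$), and makes transparent exactly where the closure constraint enters---as you note, through the scale-invariant ratio $|\Gamma|/\|\kappa\|_{L^2}^2$. The paper's level-set argument is perhaps more geometrically suggestive but is less direct and requires tracking more auxiliary quantities.
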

\begin{proof}
Since $\mrH$ is self adjoint and its bilinear form is nonnegative its spectrum is also real and non-negative. 
To establish a uniform lower  bound on its spectrum over $\cA_+$ we use the constraint $\int_\mbbS \kappa\mrd \sigma= 2\pi$. 
From Young's inequality we have
$$\begin{aligned}
2\pi =\int_\mbbS \kappa  \rmd \sigma &\leq \left(\int_\mbbS\kappa^2\mrd\sigma\right)^\frac12 \left(\int_\mbbS\mrd\sigma\right)^\frac12,\\
\end{aligned}
$$
from which we deduce the lower bound
$$
\| \kappa \|_{L^2} \geq  \frac{2\pi}{\sqrt{M}}.
$$
The bilinear form induced by $\mrH$ takes the form
$$\begin{aligned}
    \langle \mrH u, u\rangle_\mbbS&=\int_\mbbS |\nabla_s u|^2+\kappa^2u^2 \,\mrd \sigma.
\end{aligned}$$
 Define the set 
 $$S_\kappa:=\left\{s\in\mbbS\,\bigl|\,\kappa^2>\frac{2\pi^2}{M^2}\right\}.$$ 
 Since $|\mbbS|=1$ the lower bound on the $L^2$ norm of $\kappa$ implies that
\beq\label{e:Sk-lower}\int_{S_\kappa}\kappa^2\,\mrd\sigma \ge \frac{4\pi^2}{M}-\frac{2\pi^2}{M^2}\int_{\mbbS\backslash S_\kappa} \mrd\sigma \geq \frac{2\pi^2}{M}.\eeq
Consider a function $u\in H^1(\mbbS)$
with $\|u\|_{L^2}=1$ and
$$  \langle \mrH u, u\rangle_\mbbS\leq \nu,$$
for some $\nu>0.$
This implies that
$$ \int_{S_\kappa} u^2\mrd \sigma<\frac{\nu M^2}{2\pi^2}.$$
Let $u_\kappa\in\mbbR$ denote the infimum of $|u|$ over $S_\kappa.$ Using \eqref{e:Sk-lower} we have
$$ \nu\geq \int_{S_\kappa} |\nabla_s u|^2+\kappa^2 u^2\mrd \sigma \geq u_\kappa^2 \int_{S_\kappa}\kappa^2\mrd\sigma\geq  \frac{2\pi^2}{M} u_\kappa^2,$$
and hence
$$ u_\kappa\leq \sqrt{\frac{\nu M}{2\pi^2}}.$$
The set $S_u:=\{s\in\mbbS\,\bigl|\, u^2>\frac{1}{2M}\}$, is not empty. Indeed, since $\|u\|_{L^2}=1$ we have the lower bound on its size 
$$ \int_{S_u} u^2\mrd \sigma=1-\int_{\mbbS\backslash S_u}u^2\rmd \sigma \geq \frac12.$$
Fix an  arc-length function $g$. If $u$ takes values $u_\pm\in\mbbR$ with $u_+>u_-$ over a separation of $\ell$, then a standard calculus of variations argument on the quantity
$$ \int_0^\ell |\nabla_s u|^2\mrd \sigma,$$
for $u\in H^1([0,\ell])$ subject to $u(0)=u_-$ and $u(\ell)=u_+,$ shows that the minimum is achieved at
$$ u_*(s) =\frac{u_+-u_-}{\int_0^\ell\mrd\sigma}\int_0^s\mrd\sigma.$$
This minimizer has constant surface gradient
$$\nabla_s u_* =\frac{u_+-u_-}{\int_0^\ell\mrd\sigma}.$$
Consequently if any $u\in H^1(\mbbS)$ attains the values $u_\pm$ over a separation $\ell,$ we have the lower bound
\beq\label{e:grad-bnd}
\int_\mbbS |\nabla_s u|^2\rmd \sigma >\frac{(u_+-u_-)^2}{\int_0^\ell \mrd\sigma}\geq \frac{(u_+-u_-)^2}{M}.
\eeq
If $\nu< \pi^2/M$ then we have  $u_\kappa<\frac{1}{\sqrt{2M}}$ and may takes these values as our $u_\pm$. This affords a lower bound on the gradient integral and hence on $\nu$. Specifically \eqref{e:grad-bnd} yields
$$ \nu \geq \langle \mrH u, u\rangle > \frac{1}{M} \left(\frac{1}{\sqrt{2M}}-\sqrt{\frac{\nu M}{2\pi^2}}\right)^2> \frac{1}{2M^2}\left(1-\frac{M}{\pi}\sqrt{\nu}\right)^2.$$
This inequality can only hold if $\nu$ satisfies a universal lower bound 
\beq\label{e:nu0-def}\nu\geq\nu_0:= \frac{c}{M^2},
\eeq
for a constant $c.$ This  establishes the uniform $L^2$ coercivity of $\mrH$ over $\cA_M$. To obtain $H^1$ coercivity we observe
that 
$$\begin{aligned}
    \langle \mrH u, u\rangle &\geq \frac 12 \int_\mbbS |\nabla_s u|^2\mrd\sigma+ \frac12\langle\mrH u, u\rangle,\\ 
    &\geq \frac12\int_\mbbS |\nabla_s u|^2\mrd\sigma+   \frac {\nu_0}{2} \|u\|_{L^2}^2.
    \end{aligned} 
 $$
 Taking $\nu_1=\frac12 \min\{1,\nu_0\}$ 
 yields the $H^1$ coercivity.
\end{proof}
We extend the $H^1$ coercivity of $\mrH$ to $H^2$ coercivity subject to an additional assumption on $\kappa.$
\begin{lemma}
\label{l:H2_coerc}
Assume that $U\in\cA_M$ satisfies $\|\kappa\|_{H^1}\leq M.$
Then there exist constant $\nu_2>0$, depending only upon $M$ such that
$$\begin{aligned}
     \|\mrH u \|_{L^2} \geq \nu_2 \|u\|_{H^2},
\end{aligned}$$
for all $u\in H^2(\mbbS).$
\end{lemma}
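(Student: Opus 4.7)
The plan is to bootstrap from the $H^1$ coercivity already obtained in Lemma \ref{l:H_coerc} to an $H^2$ estimate by expanding $\|\mrH u\|_{L^2}^2$ and absorbing a cross term. Since $\mrH u=-\Delta_s u+\kappa^2 u$, squaring and integrating by parts gives
\begin{align*}
\|\mrH u\|_{L^2}^2 &= \|\Delta_s u\|_{L^2}^2 -2\int_\mbbS \Delta_s u\,\kappa^2 u\,\mrd\sigma+\|\kappa^2 u\|_{L^2}^2\\
&= \|\Delta_s u\|_{L^2}^2 +2\int_\mbbS \kappa^2|\nabla_s u|^2\,\mrd\sigma + 4\int_\mbbS \kappa\,\nabla_s\kappa\, u\,\nabla_s u\,\mrd\sigma+\|\kappa^2 u\|_{L^2}^2.
\end{align*}
The first, second, and fourth terms are non-negative; all the work is in controlling the cross term from below.

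First I would use the embedding $H^1(\mbbS)\hookrightarrow L^\infty(\mbbS)$ recorded just before Lemma \ref{l:H_coerc}, together with the hypothesis $\|\kappa\|_{H^1}\leq M$, to obtain $\|\kappa\|_{L^\infty}\leq C M$. Then Cauchy--Schwarz applied to the cross term yields
$$ \left|4\int_\mbbS \kappa\,\nabla_s\kappa\, u\,\nabla_s u\,\mrd\sigma\right| \leq 4\|\kappa\|_{L^\infty}\|\nabla_s\kappa\|_{L^2}\|u\|_{L^\infty}\|\nabla_s u\|_{L^2}\leq C_1 M^2\|u\|_{H^1}^2,$$
where $C_1$ depends only upon $M$ through the Sobolev constant. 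Now from Lemma \ref{l:H_coerc} and Cauchy--Schwarz,
$$ \nu_1\|u\|_{H^1}^2\leq \langle \mrH u,u\rangle\leq \|\mrH u\|_{L^2}\|u\|_{L^2}\leq \|\mrH u\|_{L^2}\|u\|_{H^1},$$
so $\|u\|_{H^1}\leq \nu_1^{-1}\|\mrH u\|_{L^2}$. Combining the two displays with the identity for $\|\mrH u\|_{L^2}^2$ gives
$$ \|\Delta_s u\|_{L^2}^2 \leq \|\mrH u\|_{L^2}^2 + C_1 M^2\|u\|_{H^1}^2 \leq \bigl(1+C_1M^2\nu_1^{-2}\bigr)\|\mrH u\|_{L^2}^2,$$
and hence $\|\Delta_s u\|_{L^2}\leq C_2\|\mrH u\|_{L^2}$ with $C_2=C_2(M)$.

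Finally, since $\nabla_s^2 u=\Delta_s u$ on scalars, the metric-dependent norm \eqref{e:H^s_def} reads $\|u\|_{H^2}^2=\|\Delta_s u\|_{L^2}^2+\|u\|_{L^2}^2$. Using the bound just obtained together with $\|u\|_{L^2}\leq \nu_1^{-1}\|\mrH u\|_{L^2}$ from above, I would conclude
$$\|u\|_{H^2}^2 \leq \bigl(C_2^2+\nu_1^{-2}\bigr)\|\mrH u\|_{L^2}^2,$$
which yields $\nu_2=\bigl(C_2^2+\nu_1^{-2}\bigr)^{-1/2}$, depending only upon $M$. The main obstacle is controlling the cross term $4\int \kappa\nabla_s\kappa\, u\,\nabla_s u\,\mrd\sigma$ uniformly in $U$; this is precisely where the added hypothesis $\|\kappa\|_{H^1}\leq M$ (beyond the length bound $U\in\cA_M$ used in Lemma \ref{l:H_coerc}) is indispensable, because it simultaneously supplies an $L^2$ bound on $\nabla_s\kappa$ and, through the one-dimensional Sobolev embedding, an $L^\infty$ bound on $\kappa$.
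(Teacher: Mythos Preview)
Your argument is correct and follows essentially the same route as the paper: expand $\|\mrH u\|_{L^2}^2$, integrate the cross term by parts, bound it via $\|\kappa\|_{H^1}\le M$ and the one-dimensional Sobolev embedding, and then feed in the $H^1$ bound $\|u\|_{H^1}\le\nu_1^{-1}\|\mrH u\|_{L^2}$ coming from Lemma~\ref{l:H_coerc}. The paper compresses all of this into the single line ``integrating by parts yields the estimate $\|\mrH u\|_{L^2}^2\ge \|\Delta_s u\|_{L^2}^2-c\|\kappa\|_{H^1}^2\|u\|_{H^1}^2$''; your version is a more explicit unpacking of exactly that step.
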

\begin{proof}
From Young's inequality the $H^1$ coercivity implies the $H^1$ bound
$$ \|\mrH u\|_{L^2} \geq \nu_1 \|u\|_{H^1}.$$
On the other hand, integrating by parts yields the estimate
$$ \begin{aligned}
    \|\mrH u\|_{L^2}^2 &\geq \|\Delta_s u\|_{L^2}^2 - c\|\kappa\|_{H^1}^2\|u\|_{H^1}^2,\\
    &\geq \|\Delta_s u\|_{L^2}^2 +\|u\|_{L^2}^2 -c\frac{M^2}{\nu_1^2} \|\mrH u\|_{L^2}^2,
\end{aligned}$$
where $c$ is a numerical constant independent of $M$. We recover the $H^2$ bound for $\nu_2= c\nu_1/M.$
\end{proof}

\subsection{Application: Incompressible limit of Willmore Flows}
\label{s:Incomp_Willmore}
We verify that the structural assumptions  on $\hat\mrV$ required to apply Theorem\,\ref{t:ICM} are valid for the gradient flow generated by energies of the form
$$ \hat\cF(U)=\int_\mbbS \mrF(\kappa)\,\mrd\sigma.$$
We assume that $\mrF:\mbbR\to\mbbR$ is smooth,  bounded below, and tends to positive infinity as $|\kappa|\to\infty.$  This includes the Willmore flow generated by the function $\mrF(\kappa)=\kappa^2,$ The energy induces the normal velocity \eqref{e:SD-gradflow} 
$$ \hat\mrV= -\mrG\partial_\kappa\cF-g\kappa \partial_g \cF +\kappa \rho\partial_\rho\cF.
$$
which takes the nonlinear-functional form
$$ \hat\mrV(U)=-\mrG \mrF'(\kappa) - \kappa\mrF(\kappa),$$
for which the linearized operator is
$$ \nabla_U \hat \mrV = \bpm -\mrG\left(\mrF''\smbull\right)   -3\kappa\mrF' -\mrF  \cr
                           \frac{\smbull}{g} \Delta_s\mrF'-\nabla_s\left(\frac{\smbull}{g}\nabla_s\mrF'\right)\epm.$$
For $U$ satisfying the bounds \eqref{e:MAss}, it is straightforward to verify that $\hat\mrV$ satisfies the bounds \eqref{e:hatV-bnd}. The only loss of smoothness in applying $\hat\mrV$ to $U$ arises in the action of $\Delta_s$ on $\kappa$, which reduces smoothness by two derivatives. For $\nabla_U\hat\mrV$, acting out the differential terms yields a family of linear, second order differential operators  with coefficients that are in $H^3$ or smoother. Such operators take bounded sets in $H^k$ to bounded sets in  $H^{k-2}$ for $k=1...5$ as required. 

\begin{figure}[ht]
\includegraphics[width=3.5in]{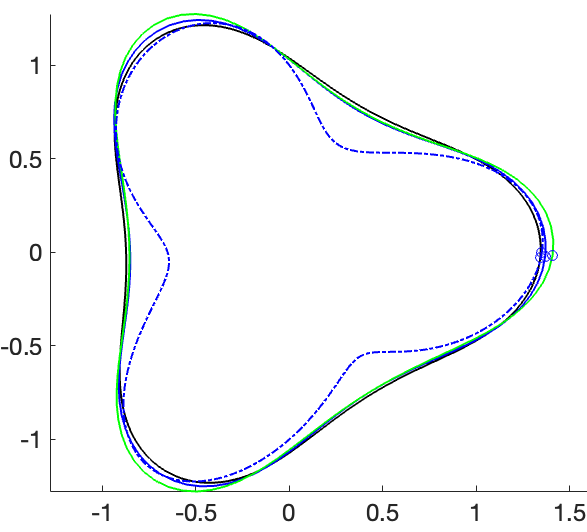}
\includegraphics[width=2.5in]{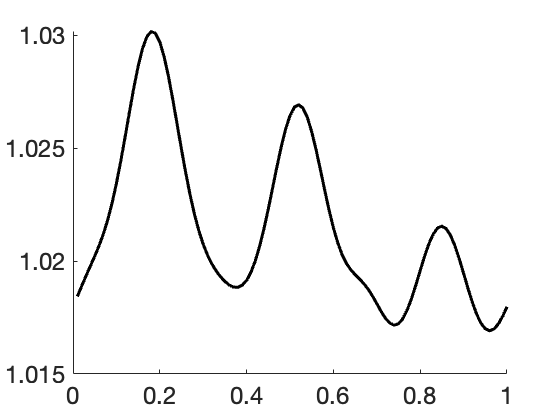}
\put(-175,135){\LARGE $\rho_m$}
\put(-30,2){\large $\mbbS$}
\caption{\small (Left) Initial curve (blue dash-dotted) and curve at time $t=0.01$ for gradient flow from the globally length preserving system \eqref{e:cF1_GF} with $\beta=5$ (black), gradient flow from local membrane density $\rho_m$ penalty \eqref{e:cF2_GF} with $\eps=0.01$ (blue), and gradient flow subject to incompressibility \eqref{e:cF2_IGF} (green). Circle on curve marks location of $\gamma(0)$. (right) Spatial distribution of membrane density  at final time for $\rho_m$ penalty gradient flow, initial $\rho_m$ was identically 1.}
\label{f:Incomp}
\end{figure}

Simulations of the gradient flow were conducted to compare three  systems that approximately or exactly preserve either total or local curve length.
The first is the gradient flow on the energy
\beq\label{e:cF1} \cF_1(U):=\int_\mbbS\frac{\kappa^2}{2}\,\mrd\sigma -\frac{\beta}{2}(|\Gamma|-|\Gamma_0|)^2.\eeq
This yields a Willmore flow with a penalty term on total curve length.
\beq \label{e:cF1_GF}
\mrV= -\mrG \kappa +\frac{\kappa^3}{2} +\kappa\beta(|\Gamma|-|\Gamma_0|).
\eeq
The second is the gradient flow subject to the local mass penalty
\beq \label{e:cF2}
\cF_2(U,\rho_m)= \int_\mbbS\frac{\kappa^2}{2}\frac{(\rho_m-1)^2}{2\eps}\,\mrd\sigma,
\eeq
and an initially uniform membrane mass $\rho_m(\cdot,0)=1.$ This yields the system
\beq\label{e:cF2_GF}
\begin{aligned}
    \mrV_\eps &= -\mrG \kappa +\frac{\kappa^3}{2} + \frac{\kappa}{2\eps} (\rho_m^2-1),\\
    \frac{D\rho_m}{Dt}&=\frac{1}{\eps} \Delta_s\rho_m.
\end{aligned}
\eeq
The third is the incompressible gradient flow associated to $\cF_2,$
\beq\label{e:cF2_IGF}
\mrV=\cI \left(-\mrG \kappa +\frac{\kappa^3}{2}\right).
\eeq

 To compare the evolution of curves  under three models we take identical initial data $U_0$ and supplement this with spatially uniform membrane density $\rho_m\equiv1$ for the flow \eqref{e:cF2_GF}. For these simple energies the final equilibrium of all three flows is a circle with a length equal, to leading order, to that of the initial curve.  It is instructive to compare the transients. The initial curve is a slightly asymmetric Trillium shape.  The end state of the three simulations at time $t=0.01$ is presented in Figure\,\ref{f:Incomp} (left). All three flows capture  the general tendency of the Willmore flow to drive evolution away from the initial non-convex shape to a convex one. The most significant difference is the rate of convergence to circular equilibrium. The constraint imposed by the mass motion, induces a lag in the shrinking of the curve length in the non-convex regions (green line), leading it to oscillate about the curve produced by the global constraint flow (black curve). These two curves have 6 roughly equally spaced crossings, with the black curve closer to circularity. This suggests a lower damping rate of the Fourier eigenmodes of the curve under the incompressible flow.   The finite $\eps$ penalty flow yields a curve that has compressed its length by $O(\eps)$.  The oscillations in the density $\rho_m$ induces peaks in the concave regions where the curve length  shortens. The asymmetry in the initial curve manifests itself in the different peak sizes, and an order of $\eps$ increase in average membrane density $\rho_m$ associated to leading-order preservation of curve length.

The numerical code is adapted from that described in \cite{NPW_25} and available on the associated GitHub page \cite{WGHub_25}.

\section{Discussion}

There are several applications to bio-membrane dynamics to which the analysis presented here can apply. A crucial application is to incorporate the incompressibility condition to gradient flows of interfacial energies that incorporate membrane adhesion and repulsion energies. Such effects are almost universal in biological membranes with embedded charges, see \cite{NPW_25} for a presentation of these. The electrostatic interactions yield local attraction while waters of solvation present a barrier to membrane fusion. The adhesion induces considerable local membrane stretching, incorporating an incompressibility prefactor could have a significant impact on the evolution.

A second application it to membranes formed from blends of components. The membrane incompressibility interacts non-trivially with packing and steric effects. Some components of membranes, particularly cholesterol, serve as interstitial agents that increase density without significantly contributing to volume. This suggests extending the membrane density to a multicomponent model with constituents $\rho:=(\rho_{1},\ldots, \rho_{N})$ that that form a blended membrane with a 
nonlinear packing energy. The incompressible penalty can take the form
$$ \cF_{\eps}(U,\rho)=\int_\mbbS \frac {( \rho^t \mrP \rho  -1)^2}{2\eps}\, \mrd \sigma,$$
where the packing compatibility matrix $\mrP\in\mbbR^{N\times N}$ is a fixed symmetric matrix with positive eigenvalues. The larger eigenvalues corresponding to eigenvectors whose constituent blend form an unfavorable (less dense) packing, while smaller eigenvalues represent favorable (more dense) packings. In the limit $\eps\to0^+$ the packings are constrained to reside on the $N-1$ dimensional packing ellipse
$$\Sigma^{N-1}(\mrP):=\left\{\rho\in\mbbR^N_+\,\bigl|\, \rho^t\mrP\rho=1\right\}.$$ See \cite{CDPV-20} for discussion of models of glycolipids and their role in membrane stability.

\section*{acknowledgment}
The first author recognizes support from the NSF through grant NSF-2205553. The third author recognizes support from NSERC Canada. The authors anticipate development of a freeware code library for general interfacial gradient flows described in this paper.

\bibliographystyle{siam}

\end{document}